\numberwithin{equation}{section}
\def\d{\mathbb{D}}
\def\c{\mathbb{C}}
\def\t{\mathbb{T}}
\def\calr{\mathcal{R}}
\def\calf{\mathcal{F}}
\newcommand\calm{\mathcal{M}}
\newcommand\calb{\mathcal{B}}
\def\car#1{|#1|_{\rm car}}
\def\be{\begin{equation}}
\def\ee{\end{equation}}
\def\s0{s_0}
\def\p0{p_0}
\newcommand\nd{nondegenerate }
\DeclareMathOperator{\Car}{{\mathrm Car}}
 \newtheorem{theorem}{Theorem}[section]
 \newtheorem{corollary}[theorem]{Corollary}
 \newtheorem{lemma}[theorem]{Lemma}
 \newtheorem{proposition}[theorem]{Proposition}
\newtheorem{definition}[theorem]{Definition}
\newtheorem{remark}[theorem]{Remark}
\newtheorem{fact*}{Fact}
\DeclareMathOperator\rank{rank}
\newcommand\half{{\tfrac 12}}
\newcommand\dd{\mathrm d}
\newcommand\idd{\mathrm{id}_\mathbb{D}}
\newcommand\e{\mathrm e}
\newcommand{\eps}{\varepsilon}
\newcommand\ess{\mathcal{S}}
\newcommand{\ip}[2]{\left\langle #1, #2 \right\rangle}
\newcommand{\inv}{^{-1}}
\newcommand{\ph}{\varphi}
\renewcommand\phi{\varphi}
\newcommand\al{\alpha}
\newcommand\Ga{\Gamma}
\newcommand\de{\delta}
\newcommand\la{\lambda}
\newcommand\beq{\begin{equation}}
\newcommand\ds{\displaystyle}
\newcommand\eeq{\end{equation}}
\newcommand\df{\stackrel{\rm def}{=}}
\newcommand\ii{\mathrm i}
\newcommand\bbm{\begin{bmatrix}}
\newcommand\ebm{\end{bmatrix}}
\newcommand\bpm{\begin{pmatrix}}
\newcommand\epm{\end{pmatrix}}
\numberwithin{equation}{section}
\theoremstyle{definition}
\begin{document}
\title[Carath\'eodory extremal functions] {Carath\'eodory extremal functions on the symmetrized bidisc}
\author{Jim Agler}
\address{Department of Mathematics, University of California at San Diego, CA \textup{92103}, USA}
%\curraddr{}
%\email{}
%    author two information
\author{Zinaida A. Lykova}
\address{School of Mathematics,  Statistics and Physics, Newcastle University, Newcastle upon Tyne
 NE\textup{1} \textup{7}RU, U.K.}
%\curraddr{}
\email{Zinaida.Lykova@ncl.ac.uk}

%    author three information
\author{N. J. Young}
\address{School of Mathematics, Statistics and Physics, Newcastle University, Newcastle upon Tyne NE1 7RU, U.K.
{\em and} School of Mathematics, Leeds University,  Leeds LS2 9JT, U.K.}
%\curraddr{}
\email{Nicholas.Young@ncl.ac.uk}
%    \date is required; it is the date received by the editor.
\date{5th May 2018}

\dedicatory{To Rien Kaashoek in esteem and friendship}

\subjclass[2010]{32A07, 53C22, 54C15, 47A57, 32F45, 47A25, 30E05}

\keywords{Carath\'eodory extremal functions,  symmetrized bidisc, model formulae, realization formulae}

\thanks{Partially supported by National Science Foundation Grants DMS 1361720 and 1665260, the UK Engineering and Physical Sciences Research Council grant  EP/N03242X/1, the London Mathematical Society Grant 41730  and  Newcastle University}

\begin{abstract} 
We show how realization theory can be used to find the solutions of the Carath\'eodory extremal problem on the
 symmetrized bidisc
\[
G \stackrel{\rm{def}}{=} \{(z+w,zw):|z|<1, \, |w|<1\}.
\]
We show that, generically, solutions are unique up to composition with automorphisms of the disc.  We also
obtain formulae for large classes of extremal functions for the Carath\'eodory problems for tangents of non-generic types.
\end{abstract} 

\maketitle
\sloppy
\fussy

\pagenumbering{arabic}
\setcounter{page}{1}

\section*{Introduction}\label{intro}
A constant thread in the research of Marinus Kaashoek over several decades has been the power of realization theory applied to a wide variety of problems in analysis.   Among his many contributions in this area we mention his monograph \cite{bgk}, written with his longstanding collaborators Israel Gohberg and Harm Bart, which was an early and influential work in the area, and his more recent papers \cite{KaavS2014,FHK2014}.  Realization theory uses explicit formulae for functions in terms of  operators on Hilbert space to prove function-theoretic results.

In this paper we continue along the Bart-Gohberg-Kaashoek path by using realization theory to prove results in complex geometry.  Specifically, we are interested in the geometry of the {\em symmetrized bidisc}
\[
G \stackrel{\rm{def}}{=} \{(z+w,zw):|z|<1, \, |w|<1\},
\]
a domain in $\c^2$ that has been much studied in the last two decades: see  \cite{cos04,ez05,jp04,bhatta,sarkar,tryb,aly2016}, along with many other papers.
We shall use realization theory to prove detailed results about the {\em Carath\'eodory extremal problem} on $G$, defined as follows (see \cite{kob98, jp}).

Consider a domain (that is, a connected open set) $\Omega$ in $\c^n$.
For domains $\Omega_1,\ \Omega_2$, we denote by $\Omega_2(\Omega_1)$ the set of holomorphic maps from $\Omega_1$ to $\Omega_2$.
A point in the complex tangent bundle $T\Omega$ of $\Omega$ will be called a {\em tangent} (to $\Omega$).  Thus if $\de\df (\la,v)$ is a tangent to $\Omega$ then $\la\in \Omega$ and $v$ is a point in the complex tangent space $T_\la \Omega \sim \c^n$ of $\Omega$ at $\la$.  We say that $\de$ is a {\em \nd} tangent if $v\neq 0$.  We write $|\cdot|$ for the Poincar\'e metric on $T\d$:
\[
|(z, v)| \df \frac{|v|}{1-|z|^2} \quad \mbox{ for } z\in\d, \, v\in\c.
\]

The {\em Carath\'eodory}  or {\em Carath\'eodory-Reiffen  pseudometric} \cite{jp} on $\Omega$ is the Finsler pseudometric $\car{\cdot}$ on $T\Omega$ defined for $\de=(\la,v)\in T\Omega$ by
\begin{align}\label{carprob}
\car{\de} & \df \sup_{F\in \d(\Omega)} |F_*(\de)| \notag \\
	&= \sup_{F\in \d(\Omega)} \frac{ |D_vF(\la)|}{1-|F(\la)|^2}.
\end{align}
Here $F_*$ is the standard notation for the pushforward of $\de$ by the map $F$ to an element of $T\d$, given by
\[
\ip{g}{F_*(\de)}=\ip{g\circ F}{\de}
\]
for any analytic function $g$ in a neighbourhood of $F(\la)$.

The {\em Carath\'eodory extremal problem}  $\Car \de$ on $\Omega$ is to calculate $\car{\de}$ for a given $\de\in T\Omega$, and to find the corresponding extremal functions, which is to say, the functions $F\in\d(\Omega)$ for which the supremum in equation \eqref{carprob} is attained.   We shall also say that $F$ {\em solves} $\Car\de$ to mean that $F$ is an extremal function for $\Car\de$.

For a general domain $\Omega$ one cannot expect to find either $\car{\cdot}$ or the corresponding extremal functions explicitly.  In a few cases, however, there are more or less explicit formulae for $\car{\de}$.  In particular, when $\Omega=G$, $\car{\cdot}$ is a metric on $TG$ (it is positive for \nd tangents) and  the following result obtains \cite[Theorem 1.1 and Corollary 4.3]{ay2004}.  We use the co-ordinates $(s^1,s^2)$ for a point of $G$.
\begin{theorem}\label{extthm10}
Let $\de$ be a \nd tangent vector in $TG$.  There exists $\omega\in\t$ such that the function in $\d(G)$ given by
\beq\label{defPhi}
\Phi_\omega(s^1,s^2) \df \frac{2\omega s^2-s^1}{2-\omega s^1}
\eeq
is extremal for the Carath\'eodory problem $\Car{\de}$ in $G$.
\end{theorem}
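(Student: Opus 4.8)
\emph{Proof sketch.} The easy inequality is $\car{\de}\ge\sup_{\omega\in\t}|(\Phi_\omega)_*(\de)|$: for every $\omega\in\t$ the function $\Phi_\omega$ lies in $\d(G)$ (equivalently $|\Phi_\omega(s)|<1$ for all $s\in G$; in fact $s\in G$ if and only if $\sup_{\omega\in\t}|\Phi_\omega(s)|<1$), so $\Phi_\omega$ is a competitor in the supremum \eqref{carprob}. Writing $\de=(\la,v)$ with $\la=(s^1,s^2)\in G$ we have $|s^1|<2$, so the denominators $2-\omega s^1$ do not vanish, $|\Phi_\omega(\la)|<1$, and $\omega\mapsto|(\Phi_\omega)_*(\de)|$ is continuous on the compact circle $\t$; hence it attains its maximum. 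Thus it is enough to produce \emph{some} $\omega_1\in\t$ with $\car{\de}\le|(\Phi_{\omega_1})_*(\de)|$, for then $|(\Phi_{\omega_1})_*(\de)|\le\sup_{\omega}|(\Phi_{\omega})_*(\de)|\le\car{\de}$ forces equality, and $\omega_1$ is the required value of $\omega$.

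The bound $\car{\de}\le|(\Phi_{\omega_1})_*(\de)|$ I would obtain by a Lempert-type argument. Suppose we can find $\omega_1\in\t$, a point $\zeta_0\in\d$ and a holomorphic $h\colon\d\to G$ with $h(\zeta_0)=\la$, with $h'(\zeta_0)$ a nonzero scalar multiple of $v$, and with $\Phi_{\omega_1}\circ h\in\aut(\d)$. Pick $c\ne 0$ with $c\,h'(\zeta_0)=v$ and set $\tilde\de=(\zeta_0,c)\in T\d$, so that $h_*(\tilde\de)=\de$. For any $F\in\d(G)$ the composite $F\circ h$ is a holomorphic self-map of $\d$, so the Schwarz--Pick lemma gives $|F_*(\de)|=|(F\circ h)_*(\tilde\de)|\le|\tilde\de|$; applying this with $F=\Phi_{\omega_1}$, for which $\Phi_{\omega_1}\circ h$ is a Poincar\'e isometry, gives $|(\Phi_{\omega_1})_*(\de)|=|\tilde\de|$. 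Taking the supremum over $F$ yields $\car{\de}\le|(\Phi_{\omega_1})_*(\de)|$, and with the easy half this shows $\Phi_{\omega_1}$ solves $\Car\de$.

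Everything therefore reduces to the construction of the pair $(h,\omega_1)$, and this is the main obstacle: it is precisely where the special hyperbolic geometry of $G$ enters. One route: combine the known validity of Lempert's theorem for $G$ (the Carath\'eodory and Kobayashi metrics of $G$ agree) with the classification of the complex geodesics of $G$ --- this produces a complex geodesic $h\colon\d\to G$ (a holomorphic map with a holomorphic left inverse) through $\la$ in the direction $v$, and one then has to show that a left inverse of such an $h$ can be taken of the form $\mu\circ\Phi_{\omega_1}$ with $\mu\in\aut(\d)$, $\omega_1\in\t$, i.e.\ that $\Phi_{\omega_1}\circ h\in\aut(\d)$ for a suitable $\omega_1$. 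I would prove this last point by examining the boundary values of $h$: from the characterisation $\overline{G}=\{s:\Phi_\omega(s)\in\overline{\d}\text{ for all }\omega\in\overline{\d}\}$ one shows that $h$ carries $\t$ into a face of $\overline{G}$ of the form $\pi(\t\times\overline{\d})$ or into the distinguished boundary $\pi(\t\times\t)$, where $\pi(z,w)=(z+w,zw)$, on each of which every $\Phi_\omega$ is inner; a degree count then forces $\Phi_{\omega_1}\circ h$ to collapse to a single Blaschke factor for the $\omega_1$ adapted to that face --- concretely, on $\pi(\t\times\overline{\d})$ one has $|\Phi_\omega(\pi(z,w))|=1$ (with $|z|=1>|w|$) exactly when $\omega z=1$, which determines $\omega_1$. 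A more hands-on alternative is to build $(h,\omega_1)$ by lifting: choose a preimage $(z_0,w_0)\in\dtwo$ of $\la$ under $\pi$ and a lift $(a,b)$ of $v$, arrange $|a|/(1-|z_0|^2)\ge|b|/(1-|w_0|^2)$, and take $h=\pi\circ(\phi_1,\phi_2)$ with $\phi_1\in\aut(\d)$ sending $0$ to $z_0$ in the direction $a$ and $\phi_2$ a finite Blaschke product with $\phi_2(0)=w_0$ and $\phi_2'(0)$ parallel to $b$; here $\phi_2$ must be taken of high enough degree that the boundary values of $h$ land on the correct face and $\Phi_{\omega_1}\circ h$ reduces to degree one --- the delicate point being that the naive choice (the projection to $G$ of a complex geodesic of $\dtwo$) is in general \emph{not} extremal for $G$. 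Finally the ``royal'' case $z_0=w_0$, in which $\pi$ is branched and the lifting degenerates, is handled separately: there $\Phi_\omega(2z,z^2)=-z$ for every $\omega\in\t$, so each $\Phi_\omega$ maps the royal variety $\{(2z,z^2):z\in\d\}$ biholomorphically onto $\d$, and the claim follows either by a direct computation or by continuity from the non-royal tangents.
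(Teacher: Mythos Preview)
The paper does not actually prove this theorem: it is quoted from \cite[Theorem~1.1 and Corollary~4.3]{ay2004} as a known result and used as a starting point. So there is no ``paper's own proof'' to compare with here; your sketch should be measured against the argument in \cite{ay2004}.

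Your overall architecture is exactly right and matches the strategy of \cite{ay2004}: the easy direction is immediate, and the hard direction amounts to producing, for each nondegenerate $\de$, a complex geodesic $h$ of $G$ touching $\de$ together with an $\omega_1\in\t$ for which $\Phi_{\omega_1}\circ h\in\aut(\d)$. Once that is done, Schwarz--Pick finishes the job, just as you say.

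The gap is entirely in the construction of $(h,\omega_1)$, and neither of your two routes closes it. Route~(a) invokes ``the known validity of Lempert's theorem for $G$'' together with a classification of geodesics; but in \cite{ay2004} the equality of the Carath\'eodory and Kobayashi metrics on $G$ is \emph{derived from} the present theorem, so this is circular unless you appeal to an independent proof (Costara~\cite{cos04} or Edigarian), and even then you still owe the step that some left inverse of $h$ is of the form $m\circ\Phi_{\omega_1}$. Your boundary-values sketch for that step is suggestive but not a proof: the claim that $h(\t)$ lands in a single ``face'' of $\partial G$ adapted to one $\omega_1$ needs justification, and the degree count is not given. Route~(b), the lifting to $\d^2$, you yourself flag as problematic: the projection under $\pi$ of a geodesic of $\d^2$ is generally not a geodesic of $G$, and the proposed fix (``take $\phi_2$ of high enough degree'') is not a construction --- increasing the degree of $\phi_2$ will typically increase the degree of $\Phi_\omega\circ h$, not reduce it to one.

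What \cite{ay2004} actually does is different from both: the extremal $\Phi_\omega$ is obtained via a two-point Nevanlinna--Pick/operator-theoretic argument (reducing to the spectral Nevanlinna--Pick problem for $2\times 2$ matrices and using a Schur-complement/realization computation), rather than by first constructing the geodesic $h$. The geodesic and the equality $C=K$ then come out as consequences. If you want a self-contained proof along your lines, the cleanest path is to take the Pflug--Zwonek description of all complex geodesics of $G$ \cite{pz05} (which does not rely on the present theorem) and, for each of the explicit forms of $h$ listed there, verify directly that $\Phi_{\omega_1}\circ h$ is a M\"obius map for a suitable $\omega_1$; this is a finite case analysis and replaces the vague parts of your routes~(a) and~(b).
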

It follows that $\car{\de}$ can be obtained as the maximum modulus of a fractional quadratic function over the unit circle \cite[Corollary 4.4]{ay2004}\footnote{Unfortunately there is an $\omega$ missing in equation (4.7) of \cite{ay2004}.  The derivation given there shows that the correct formula is the present one.}: if $\de=\left((s^1,s^2), v\right) \in TG$ then
\begin{align*}
\car{\de}&= \sup_{\omega\in\t} |(\Phi_\omega)_*(\de)| \\
	&=\sup_{\omega\in\t}\left|\frac{v_1(1-\omega^2 s^2)-v_2\omega(2-\omega s^1)}{(s^1-\overline {s^1} s^2)\omega^2-2(1-|s^2|^2)\omega +\bar s-\overline {s^2} s^1}\right|.
\end{align*}
Hence $\car{\de}$ can easily be calculated numerically to any desired accuracy.
In the latter equation we use superscripts (in $s^1, s^2$) and squares (of $\omega$, $|s^2|$).

The question arises: what are the extremal functions for the problem $\Car{\de}$?
By Theorem \ref{extthm10}, there is an extremal function for $\Car{\de}$ of the form $\Phi_\omega$ for some $\omega$ in $\t$, but are there others?   It is clear that if $F$ is an extremal function for $\Car\de$ then so is $m\circ F$ for any automorphism $m$ of $\d$, by the invariance of the Poincar\'e metric on $\d$.  We shall say that the solution of $\Car\de$ is {\em essentially unique} if, for every pair of extremal functions $F_1,F_2$ for $\Car\de$, there exists an automorphism $m$ of $\d$ such that $F_2=m\circ F_1$.

We show in Theorem \ref{ess1} that, for any \nd tangent $\de\in TG$, if there is a {\em unique} $\omega$ in $\t$ such that $\Phi_\omega$ solves $\Car\de$, then the solution of $\Car\de$ {\em is} essentially unique.  Indeed, for any point $\la\in G$, the solution of $\Car(\la,v)$ is essentially unique for generic directions $v$ (Corollary \ref{genuniq}).  We also derive (in Section \ref{royal}) a parametrization of all solutions of $\Car\de$ in the special case that $\de$ is tangent to the `royal variety' $(s^1)^2=4s^2$ in $G$, and in Sections \ref{flat} and  \ref{purelybalanced} we obtain large classes of Carath\'eodory extremals for two other classes of tangents, called {\em flat} and {\em purely balanced} tangents.

The question of the essential uniqueness of solutions of $\Car\de$ in domains including $G$ was studied by L. Kosi\'nski and W. Zwonek in \cite{kos}.  Their terminology and methods differ from ours; we explain the relation of their Theorem 5.3 to our Theorem \ref{ess1} in Section \ref{relation}.  Incidentally, the authors comment that very little is known about the set of all Carath\'eodory extremals for a given tangent in a domain.  As far as the domain $G$ goes, in this paper we derive a substantial amount of information, even though we do not achieve a complete description of all Carath\'eodory extremals on $G$.

The main tool we use is a model formula for analytic functions from $G$ to the closed unit disc $\d^-$ proved in \cite{AY2017} and stated below as Definition \ref{defGmodel} and Theorem \ref{modelGthm}.  Model formulae and realization formulae for a class of functions are essentially equivalent: one can pass back and forth between them by standard methods (algebraic manipulation in one direction, lurking isometry arguments in the other).

\section{Five types of tangent}\label{5types}

There are certainly \nd tangents $\de\in TG$ for which the solution of $\Car\de$ is not essentially unique. Consider, for example, $\de$ of the form
\[
\de=\left( (2z,z^2), 2c(1,z)\right)
\]
for some $z\in\d$ and nonzero complex $c$.  We call such a tangent {\em royal}: it is tangent to the `royal variety'
\[
\calr\df \{(2z,z^2):z\in\d\}
\]
in $G$.  By a simple calculation, for any $\omega\in\t$,
\[
\Phi_\omega(2z,z^2)= -z, \qquad D_v\Phi_\omega(2z,z^2)=- c,
\]
where $v=2c(1,z)$,
so that $\Phi_\omega(2z,z^2)$ and $ D_v\Phi_\omega(2z,z^2)$ are independent of $\omega$.  It follows from Theorem \ref {extthm10} that $\Phi_\omega$ solves $\Car\de$ for {\em all} $\omega\in\t$ and that
\beq\label{carroyal}\
\car{\de}=\frac{|D_v\Phi_\omega(2z,z^2)|}{1-|\Phi_\omega(2z,z^2)|^2}= \frac{|c|}{1-|z|^2}.
\eeq
Now if $\omega_1,\ \omega_2$ are distinct points of $\t$, there is no automorphism $m$ of $\d$ such that $\Phi_{\omega_1}=m\circ\Phi_{\omega_2}$; this is a consequence of the fact that $(2\bar\omega,\bar\omega^2)$ is the unique singularity of $\Phi_\omega$ in the closure $\Ga$ of $G$.  Hence the solution of $\Car\de$ is not essentially unique.

Similar conclusions hold for another interesting class of tangents, which we call {\em flat}.  These are the tangents of the form
\[
(\la,v)=\left( (\beta+\bar\beta z,z), c(\bar\beta,1)\right)
\]
for some $\beta \in\d$ and $c\in\c\setminus \{0\}$.  It is an entertaining calculation to show that
\beq\label{entertain}
\car{(\la,v)}=\frac{|D_v\Phi_\omega(\la)|}{1-|\Phi_\omega(\la)|^2}=\frac{|c|}{1-|z|^2}
\eeq
for all $\omega\in\t$.
Again, the solution to $\Car(\la,v)$ is far from being essentially unique.

There are also tangents $\de\in TG$ such that $\Phi_\omega$ solves $\Car\de$ for exactly two values of $\omega$ in $\t$; we call these {\em purely balanced} tangents.  They can be described concretely as follows.  For any hyperbolic automorphism $m$ of $\d$ (that is, one that has two fixed points $\omega_1$ and $\omega_2$ in $\t$) let $h_m$ in $G(\d)$ be given by
\[
h_m(z)=(z+m(z),zm(z))
\]
for $z\in\d$.   A purely balanced tangent has the form
\beq\label{pbexpress}
\de=(h_m(z),c h_m'(z))
\eeq
for some hyperbolic automorphism $m$ of $\d$, some $z\in\d$ and some $c\in \c\setminus\{0\}$.  It is easy to see that, for $\omega\in\t$, the composition $\Phi_\omega\circ h_m$ is a rational inner function of degree at most $2$ and that the degree reduces to $1$ precisely when $\omega$ is either $\bar\omega_1$ or $\bar\omega_2$.  Thus, for these two values of $\omega$ (and only these), $\Phi_\omega\circ h_m$ is an automorphism of $\d$. It follows that $\Phi_\omega$ solves $\Car \de$ if and only if $\omega=\bar\omega_1$ or $\bar\omega_2$.

A fourth type of tangent, which we call {\em exceptional}, is similar to the purely balanced type, but differs in that the hyperbolic automorphism $m$ of $\d$ is replaced by a {\em parabolic} automorphism, that is, an automorphism $m$ of $\d$ which has a single fixed point $\omega_1$ in $\t$, which has multiplicity $2$.  The same argument as in the previous paragraph shows that $\Phi_\omega$ solves the Carath\'eodory problem if and only if $\omega=\bar\omega_1$.

The fifth and final type of tangent is called {\em purely unbalanced}.  It consists of the tangents $\de=(\la,v)\in TG$ such that $\Phi_\omega$ solves $\Car \de$ for a unique value $\e^{\ii t_0}$ of $\omega$ in $\t$ and
\beq\label{disting}
 \left.   \frac{d^2}{dt^2} \frac{|D_v\Phi_{\e^{\ii t}}(\la)|}{1-|\Phi_{\e^{\ii t}}(\la)|^2}\right|_{t=t_0} < 0.
\eeq
The last inequality distinguishes purely unbalanced from exceptional tangents -- the left hand side of equation \eqref{disting} is equal to zero for exceptional tangents.

The five types of tangent are discussed at length in our paper \cite{aly2016}.  We proved  \cite[Theorem 3.6]{aly2016} a `pentachotomy theorem', which states that every \nd tangent in $TG$ is of exactly one of the above five types.  We also give, for a representative tangent of each type, a cartoon showing the unique complex geodesic in $G$ touched by the tangent \cite[Appendix B]{aly2016}.

It follows trivially from Theorem \ref{extthm10} that, for every \nd tangent $\de\in TG$, either
\begin{enumerate}
\item[\rm{(1)}] there exists a unique $\omega\in\t$ such that $\Phi_\omega$ solves $\Car\de$, or
\item[\rm{(2)}] there exist at least two values of $\omega$ in $\t$ such that $\Phi_\omega$ solves $\Car\de$.
\end{enumerate}
The above discussion shows that Case (1) obtains for purely unbalanced and exceptional tangents, while Case (2) holds for royal, flat and purely balanced tangents.
For the purpose of this paper, the message to be drawn is that Case (1) is generic in the following sense.  Consider a point $\la\in G$.  Each tangent $v$ in $T_\la G$ has a `complex direction' $\c v$, which is a one-dimensional subspace of $\c^2$, or in other words, a point of the projective space $\mathrm{CP}^2$.  The directions corresponding to the royal  (if any) and flat tangents at $\la$ are just single points in $\mathrm{CP}^2$, while, from the constructive nature of the expression \eqref{pbexpress} for a purely balanced tangent, it is easy to show that there is a smooth one-real-parameter curve of purely balanced directions (see \cite[Section 1]{aly2017}).  It follows that the set of directions $\c v\in \mathrm{CP}^2$  for which a unique $\Phi_\omega$ solves $\Car\de$ contains a dense open set in $\mathrm{CP}^2$.  To summarise:
\begin{proposition}\label{generic}
For every $\la\in G$ there exists a dense open set $V_\la$ in $\mathrm{CP}^2$ such that whenever $\c v \in V_\la$, there exists a unique $\omega\in\t$ such that $\Phi_\omega$ solves $\Car (\la,v)$.
\end{proposition}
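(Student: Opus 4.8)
The plan is to deduce the statement from the pentachotomy theorem \cite[Theorem~3.6]{aly2016} together with the identification, made in Section~\ref{5types}, of Case~(1) with the purely unbalanced and exceptional tangents. For a \nd tangent $\de=(\la,v)$ the set of $\omega\in\t$ for which $\Phi_\omega$ solves $\Car\de$ is unchanged if $v$ is rescaled by a nonzero scalar (this merely rescales $D_v\Phi_\omega(\la)$), so whether Case~(1) holds depends only on the direction $\c v\in\mathrm{CP}^2$. By Section~\ref{5types}, Case~(1) holds precisely when $(\la,v)$ is purely unbalanced or exceptional, the alternative being that $(\la,v)$ is royal, flat or purely balanced. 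So it is enough to find a dense open $V_\la\subseteq\mathrm{CP}^2$ containing no direction of a royal, flat or purely balanced tangent at $\la$: by the pentachotomy theorem every tangent with direction in $V_\la$ is then purely unbalanced or exceptional, which is Case~(1).

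Write $\la=(s^1,s^2)$ and let $B_\la\subseteq\mathrm{CP}^2$ be the set of directions of royal, flat and purely balanced tangents at $\la$; I claim $\overline{B_\la}$ has empty interior. The royal tangents at $\la$, if any, all point along $\c(1,z)$ where $(s^1,s^2)=(2z,z^2)$; the flat tangents at $\la$ all point along $\c(\bar\beta,1)$ where $z=s^2$ and $\beta$ solves the real-linear equation $\beta+\bar\beta s^2=s^1$; so these contribute at most two points to $B_\la$. For the purely balanced tangents, use \eqref{pbexpress}: such a tangent at $\la$ is $\bigl(h_m(z),c\,h_m'(z)\bigr)$ with $h_m(z)=\la$, and $z+m(z)=s^1$, $zm(z)=s^2$ force $\{z,m(z)\}$ to be the root set $\{z_1,z_2\}\subseteq\d$ of $X^2-s^1X+s^2$ (these roots are distinct unless $\la\in\calr$, in which case there are no purely balanced tangents at $\la$, a hyperbolic automorphism having no fixed point in $\d$). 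Hence $m$ ranges over the circle of disc automorphisms sending $z_1$ to $z_2$, respectively $z_2$ to $z_1$, and the purely balanced directions at $\la$ are values of the continuous maps $\theta\mapsto\c\,h_{m_\theta}'(z_i)$, $i=1,2$, on the open set of $\theta$ where $m_\theta$ is hyperbolic; note $h_{m_\theta}'(z_i)\neq0$ since $m_\theta(z_i)\neq z_i$. Consequently the closure of the purely balanced part of $B_\la$ lies in the union of two continuous images of a circle, hence has two-dimensional Lebesgue measure zero in the two-real-dimensional manifold $\mathrm{CP}^2$ --- this is the precise sense of the remark in Section~\ref{5types} (see also \cite[Section~1]{aly2017}) that the purely balanced directions form a smooth one-real-parameter curve. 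Thus $\overline{B_\la}$ is closed of measure zero, so has empty interior, and $V_\la\df\mathrm{CP}^2\setminus\overline{B_\la}$ is open, dense and disjoint from $B_\la$, as required.

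I expect this argument to present no serious obstacle: the real inputs are the pentachotomy theorem and the structural descriptions of the five tangent types from \cite{aly2016,aly2017}, and the rest is point-set topology together with the elementary fact that a continuous image of a circle is null in a two-real-dimensional manifold. The one place to take care is the reduction, for fixed $\la$, of the parametrisation \eqref{pbexpress} to a single real parameter --- the root-set computation above --- and the verification that $\theta\mapsto\c\,h_{m_\theta}'(z_i)$ is genuinely well defined and continuous. If one prefers to avoid passing to a closure, note that the circle of automorphisms $m_\theta$ is compact and these maps are continuous, so $B_\la$ is in fact closed and one may take $V_\la=\mathrm{CP}^2\setminus B_\la$ directly.
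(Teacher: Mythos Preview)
Your proof is correct and follows the same line as the paper, which sketches exactly this argument in the paragraph immediately preceding the proposition (the royal and flat directions are single points, the purely balanced directions lie on a one-real-parameter curve, hence the complement contains a dense open set); you have simply supplied the details the paper leaves to the reader and to \cite{aly2017}.  One small caveat: your closing aside that $B_\la$ is itself closed is not quite right --- only the hyperbolic $m_\theta$ in your circle give purely balanced tangents, and these form an open arc, so the limit directions (coming from parabolic $m_\theta$) are exceptional rather than purely balanced --- but this is harmless, since your main argument via $\overline{B_\la}$ is unaffected.
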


\section{Tangents with a unique extremal $\Phi_\omega$}\label{Uniq}

In Section \ref{5types} we discussed extremal functions of the special form $\Phi_\omega, \ \omega \in\t$, for the Carath\'eodory problem in $G$.  However, there is no reason to expect that the $\Phi_\omega$ will be the only extremal functions.  For example, if $\de=(\la,v)$ is a \nd tangent and $\Phi_{\omega_1}, \dots,\Phi_{\omega_k}$ all solve $\Car \de$, then one can generate a large class of other extremal functions as follows.  Choose an automorphism $m_j$ of $\d$ such that $m_j\circ \Phi_{\omega_j}(\la)=0$ and $D_v(m_j\circ\Phi_{\omega_j})(\la_j)> 0$ for  $ j=1,\dots,k$.
Then each $m_j\circ \Phi_{\omega_j}$ solves $\Car\de$, and so does any convex combination of them.

Nevertheless,  if there  is a {\em unique} $\omega\in\t$ such that $\Phi_\omega$ is extremal for $\Car\de$ then the solution of $\Car\de$ {\em is} essentially unique.

\begin{theorem}\label{ess1}
Let $\de$ be a  \nd tangent in $G$ such that $\Phi_\omega$ solves $\Car\de$ for a unique value of $\omega$ in $\t$. If $\psi$ solves $\Car\de$ then there exists an automorphism $m$ of $\d$ such that $\psi=m\circ \Phi_\omega$.
\end{theorem}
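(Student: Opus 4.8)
The plan is to use the model formula for functions in $\d^-(G)$ from \cite{AY2017} (Definition \ref{defGmodel} and Theorem \ref{modelGthm}) to extract enough structure from an arbitrary extremal $\psi$ to force it into the form $m\circ\Phi_\omega$. First I would normalise: after composing with an automorphism of $\d$ we may assume $\psi(\la)=0$ and, scaling $v$ if necessary, that $D_v\psi(\la)>0$ equals $\car\de$; similarly normalise $\Phi_\omega$ so that $\Phi_\omega(\la)=0$ and $D_v\Phi_\omega(\la)=\car\de$ (this is where the hypothesis that $\Car\de$ has value $\car\de$ enters, via Theorem \ref{extthm10}). The goal is then to show $\psi=\Phi_\omega$ under these normalisations.

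The heart of the argument should be an extremality/rigidity step at the level of the model. Writing the model realization for $\psi$, the condition that $D_v\psi(\la)$ attains the Carath\'eodory extremal value translates into a statement that a certain contractive operator appearing in the realization is in fact isometric (or coisometric) on the relevant vectors — a "lurking isometry saturates" phenomenon. I expect this to pin down the model of $\psi$ up to the free parameters, and then the uniqueness hypothesis on $\omega$ is precisely what removes the remaining freedom. Concretely, the natural strategy is: (i) show that any extremal $\psi$ with $\psi(\la)=0$, $D_v\psi(\la)=\car\de$ must have $\Phi_{\omega'}$ occurring, for some $\omega'\in\t$, as a kind of "boundary component" or factor forced by its model — i.e. $\Phi_{\omega'}$ also solves $\Car\de$; (ii) invoke the hypothesis to conclude $\omega'=\omega$; (iii) show the model is then so constrained that $\psi$ and $\Phi_\omega$ have the same model data and hence coincide. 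An alternative, possibly cleaner route to (i): examine $g=\psi$ against the specific rational function $\Phi_\omega$ and use a Schwarz–Pick/three-points type rigidity on the disc — since $\psi$ and $\Phi_\omega$ agree to first order at $\la$ after normalisation, $h:=$ (their "difference") vanishes to order $\geq 1$, and one shows it must vanish identically because otherwise one could perturb to beat the extremal value.

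The step I expect to be the main obstacle is (i) — producing, from a general extremal $\psi$, the datum $\omega'\in\t$ with $\Phi_{\omega'}$ extremal for $\Car\de$ matching $\psi$'s behaviour at $\la$. The difficulty is that $\psi$ is a priori an arbitrary function in $\d^-(G)$, possibly of high "degree" or infinite-dimensional model; one must show that extremality collapses this to something comparable with a single $\Phi_{\omega'}$. I anticipate this uses the explicit structure of the $G$-model (the fact that the model space decomposes and that the multiplier built from the symmetrization variables $s^1,s^2$ has a particular form), together with the observation recorded in Section \ref{5types} that $\Phi_\omega\circ h$ controls degree for the various curves $h\in G(\d)$. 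Once (i) is in hand, (ii) is immediate from the hypothesis, and (iii) should reduce to a uniqueness statement for the model of $\psi$ given that it has the same value, derivative, and forced $\Phi_\omega$-factor at $\la$ — a finite, if slightly intricate, matching of realization data rather than a new idea.
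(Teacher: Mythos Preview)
Your proposal is a plan rather than a proof, and the two concrete mechanisms you offer both have genuine gaps.

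The ``alternative, possibly cleaner route'' via Schwarz--Pick does not work. After normalisation you have $\psi(\la)=\Phi_\omega(\la)=0$ and $D_v\psi(\la)=D_v\Phi_\omega(\la)=\car\de$, but first-order agreement at a single point of a two-dimensional domain carries no rigidity: there is no Schwarz lemma on $G$ that forces $\psi=\Phi_\omega$ from this, and a perturbation argument cannot ``beat the extremal value'' because $\psi$ already attains it. This route is a dead end.

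For the main route, your step (i) --- extracting from an arbitrary extremal $\psi$ a point $\omega'\in\t$ with $\Phi_{\omega'}$ extremal --- is not only the obstacle, it is the wrong direction. The paper does not extract $\omega'$ from $\psi$; it starts from the {\em given} $\omega$ and the complex geodesic $k\in G(\d)$ with $\Phi_\omega\circ k=\idd$ (Lemma \ref{k=c}), then shows $\psi$ is forced to equal $m\circ\Phi_\omega$. The missing mechanism is this: take a unitary $G$-model $(\calm,T,u)$ for $\ph=m^{-1}\circ\psi$ and write the spectral resolution $T=\int_\t\eta\,\dd E(\eta)$, so that the model identity becomes
\[
1-\overline{\ph(t)}\ph(s)=\int_\t\bigl(1-\overline{\Phi_\eta(t)}\Phi_\eta(s)\bigr)\,\ip{\dd E(\eta)u(s)}{u(t)}.
\]
Restricting to $s=k(z),\,t=k(w)$ and dividing by $1-\bar w z$ yields $1=I_1+I_2$, where $I_1$ is the contribution from $\{\omega\}$ and $I_2$ from $\t\setminus\{\omega\}$. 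Both are positive kernels on $\d$, and their sum has rank one. The uniqueness hypothesis now enters in a precise way you did not identify: for $\eta\neq\omega$, $\Phi_\eta$ does {\em not} solve $\Car\de$, hence $\Phi_\eta\circ k$ is a Blaschke product of degree $2$, so its $2\times 2$ Pick matrix at any pair of points has rank $2$. A measure-theoretic rank lemma (Lemma \ref{meas2}) then forces $I_2\equiv 0$, so the spectral measure is concentrated at $\omega$, and the model identity collapses to $\ph=\Phi_\omega$. Your ``lurking isometry saturates'' intuition is not what happens; the argument is a positivity/rank computation on the spectral side, and without the geodesic $k$ and the degree-$2$ observation you have no lever to move it.
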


 For the proof 
recall  the following model formula \cite[Definition 2.1 and Theorem 2.2]{AY2017}.

\begin{definition}\label{defGmodel}
A $G$-\emph{model}
 for a function $\ph$ on $G$ is a triple $(\calm,T,u)$ where $\calm$ is a separable Hilbert space, $T$ is a contraction acting on $\calm$ and $u:G \to \calm$ is an analytic map such that, for all $s,t\in G$,
\beq\label{modelform}
 1-\overline{\ph(t)}\ph(s)= \ip{ (1-t_T^* s_T) u(s)}{u(t)}_\calm
\eeq
where, for $s\in G$,
\[
s_T \df (2s^2T-s^1)(2-s^1T)\inv.
\]

A $G$-model $(\calm,T,u)$ is {\em unitary} if $T$ is a unitary operator on $\calm$.
\end{definition}

For any domain $\Omega$ we define the {\em Schur class } $\ess(\Omega)$ to be the set of holomorphic maps from $\Omega$ to the closed unit disc $\d^-$.

\begin{theorem}\label{modelGthm}
Let $\ph$ be a function on $G$.  The following three statements are equivalent.
\begin{enumerate}
\item [\rm (1)]  $\ph\in\ess(G)$;
\item [\rm (2)] $\ph$ has a $G$-model;
\item [\rm (3)] $\ph$ has a unitary $G$-model $(\calm, T, u)$.
\end{enumerate}
\end{theorem}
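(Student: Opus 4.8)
The plan is to prove the three conditions equivalent by the cycle $(3)\Rightarrow(2)\Rightarrow(1)\Rightarrow(3)$. The implication $(3)\Rightarrow(2)$ is immediate, since a unitary operator is a contraction, so a unitary $G$-model is in particular a $G$-model.

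For $(2)\Rightarrow(1)$ I would first check that $s_T$ is well defined and contractive for every $s\in G$. Since $s^1=z+w$ with $|z|,|w|<1$ we have $|s^1|<2$; as $\|T\|\le 1$ this gives $\|\tfrac{s^1}{2}T\|<1$, so $2-s^1T=2(1-\tfrac{s^1}{2}T)$ is invertible by a Neumann series and $s\mapsto(2-s^1T)^{-1}$ is analytic, whence $s_T$ is a well-defined operator depending analytically on $s$. For contractivity I would exploit that $s_T=g_s(T)$, where for fixed $s\in G$ the scalar function $g_s(\zeta)\df(2s^2\zeta-s^1)(2-s^1\zeta)\inv$ is just $\Phi_\zeta(s)$ read as a M\"obius function of $\zeta$. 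Its only pole is at $\zeta=2/s^1$, which lies outside $\d^-$, so $g_s$ is analytic on a neighbourhood of $\d^-$; and since $\Phi_\omega\in\d(G)$ for every $\omega\in\t$ we have $|g_s|<1$ on $\t$, hence $|g_s|\le 1$ on $\d^-$ by the maximum principle. Von Neumann's inequality then gives $\|s_T\|=\|g_s(T)\|\le\sup_{\d^-}|g_s|\le 1$. Putting $t=s$ in \eqref{modelform} now yields $1-|\ph(s)|^2=\|u(s)\|^2-\|s_Tu(s)\|^2\ge 0$, so $\ph$ maps $G$ into $\d^-$. Holomorphy of $\ph$ would follow by isolating it: if $\ph\not\equiv 0$, fix $t_0$ with $\ph(t_0)\ne 0$; then \eqref{modelform} gives $\overline{\ph(t_0)}\ph(s)=1-\ip{u(s)}{u(t_0)}+\ip{s_Tu(s)}{(t_0)_Tu(t_0)}$, and the right-hand side is analytic in $s$ because $u$ is analytic and $s\mapsto s_T$ is analytic. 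Dividing by $\overline{\ph(t_0)}$ shows $\ph\in\ess(G)$.

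The substantial direction is $(1)\Rightarrow(3)$, and this is where the main obstacle lies. My approach would be to lift to the bidisc: let $\pi:\dtwo\to G$, $\pi(z,w)=(z+w,zw)$, be the symmetrization map and set $f=\ph\circ\pi$, a function in $\ess(\dtwo)$ that is symmetric under the interchange of $z$ and $w$. I would then invoke Agler's model theorem on the bidisc to obtain positive kernels, written by the Kolmogorov construction as $\ip{u_i(\la)}{u_i(\mu)}$, with
\[
1-\overline{f(\mu)}f(\la)=(1-\bar\mu_1\la_1)\ip{u_1(\la)}{u_1(\mu)}+(1-\bar\mu_2\la_2)\ip{u_2(\la)}{u_2(\mu)},
\]
and, using the $z\leftrightarrow w$ symmetry of $f$, average over the two-element symmetric group so that the swap interchanges the two kernels. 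The decisive step is then to repackage this symmetric two-coordinate Agler decomposition into the single-operator form \eqref{modelform}. Here the parameter $\omega\in\t$ indexing the extremals $\Phi_\omega$ reappears: the natural model realizes $T$ as multiplication by the coordinate on a space of $\t$-indexed data, so that $s_T$ acts as multiplication by $\omega\mapsto\Phi_\omega(s)$, which makes $T$ a \emph{unitary} operator and accounts for its spectrum lying on $\t$. Translating the circle data arising from the averaged decomposition into such a multiplication model produces the required analytic $u:G\to\calm$ and unitary $T$ satisfying \eqref{modelform}, closing the cycle.

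The hard part is precisely this last repackaging: passing from the sum of two one-variable Agler kernels on $\dtwo$ to a single kernel governed by the operator pencil $2-s^1T$ and $2s^2T-s^1$, and arranging $T$ to be unitary rather than merely contractive (a naive unitary dilation of a contractive model does not preserve \eqref{modelform}, since $t_Tu(t)$ need not lie in the original space). The algebraic identity that makes the collapse work rests on the special structure of $G$ encoded in the magic functions $\Phi_\omega$ --- the same functions that furnish the Carath\'eodory extremals in Theorem \ref{extthm10} --- and the verification that the symmetrization can be carried out compatibly with that structure, together with a lurking-isometry argument to produce the unitary $T$, is the technical core of the proof.
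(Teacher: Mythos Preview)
The paper does not prove Theorem~\ref{modelGthm}; it is quoted verbatim from \cite[Definition~2.1 and Theorem~2.2]{AY2017}, so there is no in-paper proof to compare your proposal against.

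On the substance of what you wrote: your treatment of $(3)\Rightarrow(2)$ is of course fine, and your argument for $(2)\Rightarrow(1)$ is correct and essentially the standard one --- the key observation that $s_T=g_s(T)$ with $g_s$ analytic on a neighbourhood of $\d^-$ and bounded by $1$ there, followed by von Neumann's inequality, is exactly how one shows $\|s_T\|\le 1$, and your extraction of holomorphy of $\ph$ from the model formula by fixing one variable is sound.

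For $(1)\Rightarrow(3)$ your strategy of lifting through $\pi:\d^2\to G$, applying Agler's bidisc model, symmetrizing, and then collapsing the two-variable decomposition to the single-operator pencil form is indeed the route taken in \cite{AY2017}. But you have only gestured at the crucial step. You correctly identify that the passage from the symmetric Agler decomposition to the form \eqref{modelform} with $T$ unitary is the heart of the matter, and you are right that a naive unitary dilation of a contractive $T$ would not work. What you have not supplied is the actual mechanism: in \cite{AY2017} this goes through an explicit algebraic identity relating the bidisc kernels $(1-\bar\mu_j\la_j)$ to the $G$-kernel $1-\overline{\Phi_\omega(t)}\Phi_\omega(s)$ after symmetrization, followed by a lurking-isometry argument that produces the unitary $T$ directly (rather than by dilation). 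Your description stops short of this identity and of the construction of $u$ and $T$, so as a proof it is incomplete --- which you acknowledge. Since the present paper simply imports the result, your level of detail already matches what the paper itself provides.
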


From a $G$-model of a function $\ph\in\ess(G)$ one may easily proceed by means of a standard lurking isometry argument to a realization formula
\[
\ph(s)=A+Bs_T(1-Ds_T)\inv C, \quad \mbox{ all } s\in G,
\]
for $\ph$, where $ABCD$ is a contractive or unitary colligation on $\c\oplus\calm$.  However, for the present purpose it is convenient to work directly from the $G$-model.

We also require a long-established fact about $G$ \cite{ay2004}, related to the fact that the Carath\'eodory and Kobayashi metrics on $TG$ coincide.
\begin{lemma}\label{k=c}
If $\de$ is a \nd tangent to $G$ and $\ph$ solves $\Car\de$ then there exists $k$ in $G(\d)$ such that $\ph\circ k=\idd$.  Moreover, if $\psi$ is any  solution of $\Car\de$ then $\psi\circ k$ is an automorphism of $\d$.
\end{lemma}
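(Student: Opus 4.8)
The plan is to deduce this from the equality $\car\cdot=\kob\cdot$ on $TG$ — the ``long‑established fact'' the lemma is attached to, proved in \cite{ay2004} — together with tautness of $G$; once that equality is in hand, the lemma drops out of the Schwarz--Pick rigidity lemma.

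Write $\de=(\la,v)$ with $v\neq0$ and set $c\df\car\de=\kob\de$; nondegeneracy forces $c>0$. Since $G$ is a bounded, taut domain, the infimum defining $\kob\de$ is attained: there are $g\in G(\d)$ and a (necessarily nonzero) tangent $\zeta_0\in T\d$ with $g_*\zeta_0=\de$ and $|\zeta_0|=c$, where $|\cdot|$ is the Poincar\'e metric. For the first assertion, observe that since $\ph$ solves $\Car\de$ we have $|\ph_*\de|=\car\de=c$, so the holomorphic self‑map $\ph\circ g$ of $\d$ satisfies
\[
\big|(\ph\circ g)_*\zeta_0\big|=\big|\ph_*(g_*\zeta_0)\big|=|\ph_*\de|=c=|\zeta_0|.
\]
Thus $\ph\circ g$ preserves the Poincar\'e length of the nonzero vector $\zeta_0$, and by the Schwarz--Pick lemma a holomorphic self‑map of $\d$ with this property is an automorphism. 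Write $n\df\ph\circ g\in\aut\d$ and put $k\df g\circ n\inv\in G(\d)$; then $\ph\circ k=n\circ n\inv=\idd$, and $k$ still carries $\de$, namely $k_*\zeta_1=\de$ with $\zeta_1\df n_*\zeta_0$, of Poincar\'e length $c$.

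For the ``moreover'' I would run exactly the same argument with $k$ in place of $g$: if $\psi$ is any solution of $\Car\de$ then $|\psi_*\de|=c$, hence $(\psi\circ k)_*\zeta_1=\psi_*\de$ has Poincar\'e length $c=|\zeta_1|$, and Schwarz--Pick rigidity again forces $\psi\circ k\in\aut\d$.

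This deduction is short precisely because its substantive input is the identity $\car\de=\kob\de$; that is where the genuine difficulty lies, since without it the displayed computation yields only that $\ph\circ g$ is a contraction and gives no rigidity. Establishing $\car\de=\kob\de$ on $G$ is the work of \cite{ay2004} — it can be extracted from the explicit value of $\car\de$ provided by Theorem \ref{extthm10} together with an independent evaluation of $\kob\de$, or from a Lempert‑type construction of a complex geodesic through $\de$ — and I would regard that, rather than anything in the argument above, as the real obstacle behind the lemma.
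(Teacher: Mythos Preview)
Your argument is correct. The paper does not supply a proof of this lemma, instead citing it as a known fact from \cite{ay2004} and noting that it is ``related to the fact that the Carath\'eodory and Kobayashi metrics on $TG$ coincide''; your proof spells out precisely this deduction via Schwarz--Pick rigidity, so it matches the route the paper indicates.
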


We shall need some minor  measure-theoretic technicalities.

\begin{lemma}\label{baspos}
Let $Y$ be a set and let 
\[
A:\t\times Y\times Y \to \c
\]
be a map such that
\begin{enumerate}[\rm (1)]
\item $A(\cdot,z,w)$ is continuous on $\t$ for every $z,w \in Y$;
\item $A(\eta,\cdot,\cdot)$ is a positive kernel on $Y$ for every $\eta\in\t$.
\end{enumerate}
Let $\calm$ be a separable Hilbert space, let $T$ be a unitary operator on $\calm$ with spectral resolution
\[
T=\int_\t \eta \, \dd E(\eta)
\]
and let $v:Y\to\calm$ be a mapping.  
Let
\beq\label{defC}
C(z,w)= \int_\t A(\eta,z,w)\, \ip{\dd E(\eta)v(z)}{v(w)}
\eeq
for all $z,w \in Y$.  Then  $C$ is a positive kernel on $Y$.
\end{lemma}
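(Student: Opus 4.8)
The plan is to verify the defining inequality of a positive kernel directly: given points $z_1,\dots,z_n\in Y$ and scalars $c_1,\dots,c_n\in\c$, I must show that $\sum_{i,j}c_i\bar c_j\,C(z_i,z_j)\ge 0$. First note that each integral defining $C(z,w)$ converges absolutely, since $A(\cdot,z,w)$ is continuous, hence bounded, on the compact set $\t$, while $\sigma\mapsto\ip{E(\sigma)v(z)}{v(w)}$ is a complex Borel measure of total variation at most $\|v(z)\|\,\|v(w)\|$; so $C$ is well defined. I would then absorb the scalars by setting $u_i\df c_iv(z_i)\in\calm$, which turns the quantity to be estimated into $\sum_{i,j}\int_\t A(\eta,z_i,z_j)\,\dd\mu_{ij}(\eta)$, where $\mu_{ij}$ is the complex measure $\sigma\mapsto\ip{E(\sigma)u_i}{u_j}$.

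Two positivity facts are then in play. By hypothesis (2), for each fixed $\eta\in\t$ the $n\times n$ matrix $M(\eta)\df[A(\eta,z_i,z_j)]$ is positive semidefinite. And for each Borel set $\sigma\subseteq\t$ the matrix $N(\sigma)\df[\mu_{ij}(\sigma)]$ is positive semidefinite, being the Gram matrix of $E(\sigma)u_1,\dots,E(\sigma)u_n$ --- here one uses that $E(\sigma)$ is an orthogonal projection, so that $\ip{E(\sigma)u_i}{u_j}=\ip{E(\sigma)u_i}{E(\sigma)u_j}$. The heart of the proof is that the integrand couples these two structures through a Hadamard product.

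Next I would approximate the integral by Riemann sums adapted to a partition of $\t$ into finitely many disjoint Borel arcs $\Delta_1,\dots,\Delta_\ell$ of mesh $\delta$, with a sample point $\eta_k\in\Delta_k$ in each. Uniform continuity on $\t$ of the finitely many functions $A(\cdot,z_i,z_j)$ (hypothesis (1)), together with $|\mu_{ij}|(\t)<\infty$, gives
\[
\Bigl|\,\sum_{i,j}\int_\t A(\eta,z_i,z_j)\,\dd\mu_{ij}(\eta)\;-\;\sum_{k=1}^\ell\sum_{i,j}A(\eta_k,z_i,z_j)\,\mu_{ij}(\Delta_k)\,\Bigr|\;\le\;\sum_{i,j}\omega_{ij}(\delta)\,|\mu_{ij}|(\t)\;\xrightarrow[\delta\to 0]{}\;0,
\]
where $\omega_{ij}$ is the modulus of continuity of $A(\cdot,z_i,z_j)$. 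For each fixed $k$ the inner double sum equals $\sum_{i,j}\bigl(M(\eta_k)\circ N(\Delta_k)\bigr)_{ij}$, where $\circ$ denotes the entrywise (Hadamard) product; by the Schur product theorem $M(\eta_k)\circ N(\Delta_k)$ is positive semidefinite, and the sum of all entries of a positive semidefinite matrix $P$ equals $\ip{P\mathbf 1}{\mathbf 1}\ge 0$ with $\mathbf 1=(1,\dots,1)$. Hence every Riemann sum is a sum of nonnegative reals, so its limit $\sum_{i,j}c_i\bar c_j\,C(z_i,z_j)$ is a nonnegative real, which is exactly what we need.

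I do not expect a genuine obstacle. Once one sees that the integrand carries one positive-semidefinite structure in the $\t$-variable and another in the spectral variable, coupled by a Hadamard product, the Schur product theorem does the work. The only step demanding a little care is the routine justification of the Riemann-sum limit above, which is precisely where hypothesis (1) (continuity, hence uniform continuity on the compact set $\t$) and the finiteness of the total variations of the spectral measures $\ip{E(\cdot)u_i}{u_j}$ get used.
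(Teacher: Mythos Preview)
Your proposal is correct and follows essentially the same route as the paper: approximate the continuous integrand uniformly by simple functions taking values at sample points of $\t$, then observe that each approximating sum is a finite sum of Schur products of the positive matrix $[A(\eta_k,z_i,z_j)]$ with the positive Gram matrix $[\ip{E(\Delta_k)u_i}{u_j}]$, and invoke the Schur product theorem. The only cosmetic difference is that the paper shows directly that the matrix $[C(z_i,z_j)]$ is positive (rather than absorbing the scalars $c_i$ into the vectors and evaluating the quadratic form against $\mathbf 1$), but the key ingredients---uniform continuity on $\t$, positivity of the spectral Gram matrix, and the Schur product theorem---are identical.
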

\begin{proof}
Consider any finite subset $\{z_1,\dots,z_N\}$ of $Y$.  We must show that the $N\times N$ matrix
\[
\bbm C(z_i,z_j) \ebm_{i,j=1}^N
\]
is positive.

Since $A(\cdot,z_i,z_j)$ is continuous on $\t$ for each $i$ and $j$, we may approximate the $N\times N$-matrix-valued function $[A(\cdot,z_i,z_j)]$ uniformly on $\t$ by integrable simple functions of the form
\[
[f_{ij}]= \sum_\ell  b^\ell \chi_{\tau_\ell}
\]
for some $N\times N$ matrices $b^\ell$ and Borel sets $\tau_\ell$, where $\chi$ denotes `characteristic function'.  Moreover we may do this in such a way that
each $b^\ell$ is a value $[A(\eta,z_i,z_j)]$ for some $\eta\in\t$, hence is positive.  Then
\beq\label{approxsum}
\bbm \int_\tau f_{ij}(\eta)\, \ip{\dd E(\eta) z_i}{z_j} \ebm_{i,j=1}^N =\sum_\ell b^\ell * \bbm \ip{E(\tau_\ell)v_i}{v_j} \ebm_{i,j=1}^N
\eeq
where $*$ denotes the Schur (or Hadamard) product of matrices.   Since the matrix $\bbm \ip{E(\tau_\ell)v_i}{v_j} \ebm$ is positive and the Schur product of positive matrices is positive, every approximating sum of the form \eqref{approxsum} is positive, and hence the integral in equation \eqref{defC} is a positive matrix.

\end{proof}

\begin{lemma}\label{meas2}
For $i,j=1,2$ let $a_{ij}:\t\to\c$ be continuous and let each $a_{ij}$ have only finitely many zeros in $\t$. 
Let $\nu_{ij}$ be a complex-valued Borel measure on $\t$ such that, for every Borel set $\tau$ in $\t$,
\[
\bbm \nu_{ij}(\tau) \ebm_{i,j=1}^2 \geq 0.
\]
Let $X$ be a Borel subset of $\t$ and suppose that
\[
 \bbm a_{ij}(\eta) \ebm_{i,j=1}^2\;  \mbox{ is positive and of rank } 2  \mbox{ for all } \eta \in X.
\]
Let
\[
C= \bbm c_{ij}\ebm_{i,j=1}^2
\]
where
\[
c_{ij}= \int_X a_{ij}(\eta)\,  \dd \nu_{ij}(\eta)   \qquad \mbox{ for }i,j=1,2.
\]
If  $\rank C \leq 1$
then either $c_{11}=0$ or $c_{22}=0$.
%$C=0$.

\end{lemma}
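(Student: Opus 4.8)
The plan is to argue by contradiction. Suppose $\rank C\le 1$ but $c_{11}\ne 0$ and $c_{22}\ne 0$. Since for $\eta\in X$ the matrix $[a_{ij}(\eta)]$ is positive definite, its diagonal entries $a_{11}(\eta),a_{22}(\eta)$ are positive, and as $\nu_{11},\nu_{22}\ge 0$ the numbers $c_{11},c_{22}$ are non-negative, hence $c_{11},c_{22}>0$. A short computation, using $a_{21}=\overline{a_{12}}$ on $X$ and $\nu_{21}=\overline{\nu_{12}}$, shows $c_{21}=\overline{c_{12}}$, so $C$ is Hermitian; being $2\times 2$ it then satisfies $\rank C\le 1$ iff $\det C=c_{11}c_{22}-|c_{12}|^2=0$. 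The first real step is to trade the matrix measure for scalar densities: put $\mu=\nu_{11}+\nu_{22}$, a finite positive Borel measure on $\t$, and write $\dd\nu_{ij}=h_{ij}\,\dd\mu$ by Radon--Nikodym. Testing the hypothesis $[\nu_{ij}(\tau)]\ge 0$ against a countable dense subset of $\c^2$ and using continuity shows $[h_{ij}(\eta)]\ge 0$ for $\mu$-a.e.\ $\eta$; in particular $h_{11},h_{22}\ge 0$ and $|h_{12}|^2\le h_{11}h_{22}$ $\mu$-a.e. Boundedness of the continuous functions $a_{ij}$ on the compact set $\t$ makes all the integrals below finite.

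The heart of the argument is the chain
\[
\begin{aligned}
|c_{12}|^2\ &\le\ \Bigl(\int_X |a_{12}h_{12}|\,\dd\mu\Bigr)^{2}\ \le\ \Bigl(\int_X \sqrt{a_{11}a_{22}}\,\sqrt{h_{11}h_{22}}\,\dd\mu\Bigr)^{2}\\
&\le\ \int_X a_{11}h_{11}\,\dd\mu\cdot\int_X a_{22}h_{22}\,\dd\mu\ =\ c_{11}c_{22},
\end{aligned}
\]
where the first inequality is $|\int|\le\int|\cdot|$, the second uses the pointwise bounds $|a_{12}|\le\sqrt{a_{11}a_{22}}$ on $X$ (positivity of $[a_{ij}(\eta)]$) and $|h_{12}|\le\sqrt{h_{11}h_{22}}$ $\mu$-a.e., and the third is Cauchy--Schwarz in $L^2(\mu|_X)$ applied to $\sqrt{a_{11}h_{11}}$ and $\sqrt{a_{22}h_{22}}$. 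Because the contradiction hypothesis gives $|c_{12}|^2=c_{11}c_{22}$, every inequality in the chain is an equality, and the whole fight is now to squeeze a contradiction out of the two equality cases.

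Equality in the Cauchy--Schwarz step forces $\sqrt{a_{11}h_{11}}=\lambda\sqrt{a_{22}h_{22}}$ $\mu$-a.e.\ on $X$ for some constant $\lambda$; since these functions have $L^2(\mu|_X)$-norms $\sqrt{c_{11}},\sqrt{c_{22}}\in(0,\infty)$ we get $\lambda>0$, and as $a_{11},a_{22}>0$ on $X$ this yields $h_{11}(\eta)=0\iff h_{22}(\eta)=0$ for $\mu$-a.e.\ $\eta\in X$. Equality in the middle step forces $|a_{12}(\eta)|\,|h_{12}(\eta)|=\sqrt{a_{11}(\eta)a_{22}(\eta)}\,\sqrt{h_{11}(\eta)h_{22}(\eta)}$ for $\mu$-a.e.\ $\eta\in X$; here the hypothesis that $[a_{ij}(\eta)]$ has rank \emph{exactly} $2$ is essential, because it gives the \emph{strict} inequality $|a_{12}(\eta)|<\sqrt{a_{11}(\eta)a_{22}(\eta)}$, so at any $\eta\in X$ with $h_{11}(\eta)h_{22}(\eta)>0$ the displayed identity fails. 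Hence $h_{11}h_{22}=0$ $\mu$-a.e.\ on $X$, and combined with the previous sentence this forces $h_{11}=h_{22}=0$ $\mu$-a.e.\ on $X$. But then $c_{11}=\int_X a_{11}h_{11}\,\dd\mu=0$, contradicting $c_{11}>0$; therefore $c_{11}=0$ or $c_{22}=0$.

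The only genuinely delicate point I foresee is the measure-theoretic reduction in the first paragraph: producing the densities $h_{ij}$ and checking $[h_{ij}(\eta)]\ge 0$ $\mu$-a.e.\ is standard but a little fiddly, and one must also be sure all integrands are $\mu$-measurable and $\mu$-integrable (continuity and compactness handle the latter). Everything after that is an equality analysis of two Cauchy--Schwarz-type inequalities, with the rank-$2$ hypothesis on $X$ doing precisely the work of converting the equality into the vanishing $h_{11}h_{22}=0$ (and indeed the conclusion is false without it, as the constant data $a_{ij}\equiv 1$, $\nu_{ij}=$ arc-length show).
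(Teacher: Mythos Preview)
Your proof is correct, and it takes a genuinely different route from the paper's.

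The paper argues as follows. Assuming $c_{11},c_{22}\neq 0$, write $C=cc^*$ with $c_1,c_2\neq 0$. For every Borel $\tau\subset X$ the matrix $\bigl[\int_\tau a_{ij}\,\dd\nu_{ij}\bigr]$ is positive and is dominated by $C=cc^*$; being sandwiched between $0$ and a rank-one positive matrix, it must equal $\mu(\tau)C$ for a unique $\mu(\tau)\in[0,1]$. This $\mu$ is a Borel probability measure on $X$, it annihilates the finite zero-set $Z$ of the $a_{ij}$, and on $X\setminus Z$ one reads off the Radon--Nikodym relation $\dd\nu_{ij}=(c_i\bar c_j/a_{ij})\,\dd\mu$. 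A direct calculation shows $\det\bigl[c_i\bar c_j/a_{ij}(\eta)\bigr]<0$ (this is where $\rank[a_{ij}(\eta)]=2$ enters), so the density matrix has a negative eigenvalue; integrating over a small set of positive $\mu$-measure contradicts $[\nu_{ij}(\tau)]\geq 0$.

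Your argument instead passes once to densities $h_{ij}$ with respect to $\mu=\nu_{11}+\nu_{22}$ and runs a Cauchy--Schwarz chain $|c_{12}|^2\le c_{11}c_{22}$, then exploits the equality cases: equality in Cauchy--Schwarz forces $h_{11}=0\Leftrightarrow h_{22}=0$ a.e.\ on $X$, while the strict inequality $|a_{12}|<\sqrt{a_{11}a_{22}}$ (rank $2$) forces $h_{11}h_{22}=0$ a.e., and the two together give $c_{11}=0$. This is more elementary---no compactness or support-point argument---and, notably, it never uses the hypothesis that each $a_{ij}$ has only finitely many zeros on $\t$; continuity (hence boundedness) and the rank-$2$ assumption on $X$ suffice. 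The paper's approach, by contrast, needs the finite zero-set to ensure $\mu(Z)=0$ and to locate a point where the density matrix is defined. So your route both simplifies the argument and mildly sharpens the lemma.
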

\begin{proof}
By hypothesis the set 
\[
Z\df \bigcup_{i,j=1}^2  \{ \eta\in\t: a_{ij}(\eta)=0\}
\]
is finite.

Exactly as in the proof of Lemma \ref{baspos}, for any Borel set $\tau$ in $\t$,
\beq\label{posint}
\bbm \int_\tau a_{ij}\,  \dd\nu_{ij} \ebm_{ij=1}^2 \geq 0.
\eeq

Suppose that $C$ has rank at most $1$ but $c_{11}$ and $c_{22}$ are both nonzero.  Then there exists a nonzero $2\times 1$ matrix $c=[c_1 \,  c_2]^T$ such that $C=cc^*$ for $i,j=1,2$ and $c_1, \ c_2$ are nonzero.

For any Borel set $\tau\subset X$,
\[
\bbm \int_\tau a_{ij}\, \dd\nu_{ij} \ebm \leq \bbm \int_\tau+\int_{X\setminus\tau} a_{ij}\,  \dd\nu_{ij} \ebm = \bbm \int_{X} a_{ij}\,  \dd\nu_{ij} \ebm= C= cc^*.
\]
Consequently there exists a unique $\mu(\tau) \in [0,1]$ such that
\beq\label{gotmu}
\bbm \int_\tau a_{ij}\, \dd\nu_{ij} \ebm = \mu(\tau) C.
\eeq
It is easily seen that $\mu$ is a Borel probability measure on $X$.  Note that if $\eta\in Z$, say $a_{ij}(\eta)=0$, then
on taking $\tau=\{\eta\}$ in equation \eqref{gotmu}, we deduce that
\[
\mu(\{\eta\})c_i\bar c_j =0.
\]
Since $c_1, c_2$ are nonzero, it follows that $\mu(\{\eta\})=0$.  Hence $\mu(Z)=0$.

Equation \eqref{gotmu} states that $\mu$ is absolutely continuous with respect to $\nu_{ij}$ on $X$ and the Radon-Nikodym derivative is given by
\[
 c_i \bar c_j \frac{\dd \mu}{\dd\nu_{ij} }= a_{ij}
\]
for $i,j=1,2$.
Hence, on $X\setminus Z$,
\beq\label{RNd}
\dd\nu_{ij} = \frac{c_i\bar c_j}{ a_{ij}} \dd\mu, \qquad i,j=1,2.
\eeq

Pick a compact subset $K$ of $X\setminus Z$ such that $\mu(K)>0$.  This is possible, since $\mu(X\setminus Z) =1$ and
Borel measures on $\t$ are automatically regular.  By compactness, there exists a point $\eta_0\in K$ such that, for every open neighbourhood $U$ of $\eta_0$,
\[
\mu(U\cap K) >0.
\]

Notice that, for $\eta\in \t\setminus Z$,
\[
\det\bbm \ds \frac{c_i\bar c_j}{ a_{ij}(\eta)} \ebm_{i,j=1}^2 = -\frac {|c_1c_2|^2\det\bbm a_{ij}(\eta)\ebm}{a_{11}(\eta)a_{22}(\eta)|a_{12}(\eta)|^2} < 0.
\]
Thus $[c_i \bar c_j a_{ij}(\eta_0)\inv]$ has a negative eigenvalue.  Therefore there exists a unit vector $x\in\c^2$, an $\eps >0$  and an open neighourhood $U$ of $\eta_0$ in $\t$ such that
\[
\ip{ \bbm c_i\bar c_j a_{ij}(\eta)\inv\ebm x}{x} < -\eps
\]
for all $\eta\in U$.  We then have
\begin{align*}
\ip { \bbm \nu_{ij}(U\cap K) \ebm x}{x} &= \ip{\int_{U\cap K} \bbm c_i\bar c_j  a_{ij}(\eta)\inv \ebm \dd\mu(\eta)x}{x} \\
	&= \int_{U\cap K} \ip{ \bbm c_i\bar c_j a_{ij}(\eta)\inv \ebm x}{x} \dd\mu(\eta) \\
	& < -\eps \mu(U\cap K) \\
	& < 0.
\end{align*}
This contradicts the positivity of the matricial measure $\bbm \nu_{ij}\ebm$.  Hence either $c_1=0$ or $c_2=0$.

\end{proof}

\begin{proof}[Proof of Theorem {\rm \ref{ess1}}]
Let $\de$ be a \nd tangent to $G$ such that $\Phi_\omega$ is the unique function from the collection $\{\Phi_\eta\}_{\eta\in\t}$ that solves $\Car\de$.   Let $\psi$ be a solution of $\Car\de$.   We must find an automorphism $m$ of $\d$ such that $\psi=m\circ\Phi_\omega$.

By Lemma \ref{k=c}, there exists $k$ in $G(\d)$ such that 
\beq\label{getk}
\Phi_\omega\circ k= \idd,
\eeq
and moreover, the function
\beq\label{gotm}
m\df \psi\circ k
\eeq
is an automorphism of $\d$.  Let
\beq\label{defphi}
\ph=m\inv\circ \psi.
\eeq
Then 
\beq\label{propphi}
\ph\circ k= m\inv\circ \psi\circ k = m\inv\circ m=\idd.
\eeq

  By Theorem \ref{modelGthm}, there is a unitary $G$-model $(\calm, T, u)$ for $\ph$.  By the Spectral Theorem for unitary operators, there is a spectral measure $E(.)$ on $\t$ with values in $\calb(\calm)$ such that 
\[
T= \int_\t \eta \; \dd E(\eta).
\]
Thus, for $s\in G$,
\[
s_T =  (2s^2T-s^1)(2-s^1T)\inv =\int_\t \Phi_\eta(s) \; \dd E(\eta).
\]
Therefore, for all $s,t \in G$,
\begin{align}\label{63.1}
1-\overline{\ph(t)}\ph(s) &= \ip{ (1-t_T^* s_T) u(s)}{u(t)}_\calm \notag \\
	&= \int_\t \left( 1-\overline{\Phi_\eta(t)}\Phi_\eta(s)\right) \ip{\dd E(\eta)u(s)}{u(t)}_\calm.
\end{align}

Consider $z,w\in\d$, put $s=k(z), \, t=k(w)$ in equation \eqref{63.1}.  Invoke equation \eqref{propphi} and divide equation \eqref{63.1} through by  $1-\bar w z$ to obtain, for $z,w\in\d$,
\begin{align}
1 &= \int_{\{\omega\}}+\int_{\t\setminus\{\omega\}} \frac{1-\overline{\Phi_\eta \circ k(w)}\Phi_\eta\circ k(z)}{1-\bar w z} \ip{\dd E(\eta)u\circ k(z)}{u\circ k(w)} \notag \\
	&= I_1+I_2 \label{63.2}
\end{align}
where
\begin{align}\label{defI2}
I_1(z,w)	&= \ip{E(\{\omega\})u\circ k(z)}{u\circ k(w)},\notag\\
I_2(z,w)  &= \int_{\t\setminus \{\omega\}}  \frac{1-\overline{\Phi_\eta \circ k(w)}\Phi_\eta\circ k(z)}{1-\bar w z} \ip{\dd E(\eta)u\circ k(z)}{u\circ k(w)}.
\end{align}
The left hand side $1$ of equation \eqref {63.2} is a positive kernel of rank one on $\d$, and $I_1$ is also a positive kernel.   The integrand in $I_2$ is a positive kernel on $\d$ for each $\eta\in\t$, by Pick's theorem, since $\Phi_\eta\circ k$ is in the Schur class.  Hence, by Lemma \ref{baspos}, $I_2$ is  also a positive kernel on $\d$.  Since $I_1+I_2$ has rank $1$, it follows that $I_2$ has rank at most $1$ as a kernel on $\d$.

By hypothesis, $\Phi_\eta$ does {\em not} solve $\Car\de$ for any $\eta\in \t\setminus\{\omega\}$.  
Therefore $\Phi_\eta\circ k$ is a Blaschke product of degree $2$, and consequently, for any choice of distinct points
$z_1,z_2$ in $\d$, the $2\times 2$ matrix
\beq\label{defaij}
\bbm a_{ij}(\eta) \ebm_{i,j=1}^2\df \bbm  \ds \frac{1-\overline{\Phi_\eta \circ k(z_i)}\Phi_\eta\circ k(z_j)}{1-\bar z_i z_j}\ebm_{i,j=1}^2
\eeq
 is a positive matrix of rank $2$ for every $\eta\in \t\setminus\{\omega\}$.  In particular, $a_{11}(\eta)>0$ for all $\eta\in \t\setminus\{\omega\}$.

Moreover, each $a_{ij}$ has only finitely many zeros in $\t$, as may be seen from the fact that
$a_{ij}$ is a ratio of trigonometric polynomials in $\eta$.  To be explicit, if we temporarily write
$k=(k^1,k^2):\d\to G$, then equation \eqref{defaij} expands to $a_{ij}(\eta)=P(\eta)/Q(\eta)$
where
\begin{align*}
P(\eta) &=4\left(1-\overline{k^2(z_i)}k^2(z_j)\right)-2\eta\left(k^1(z_j)-
\overline{k^1(z_i)}k^2(z_j)\right) \\
	& \hspace*{2cm} -2\bar\eta\left(\overline{k^1(z_i)} - \overline{k^2(z_i)}k^1(z_j)\right), \\
Q(\eta)&= (1-\bar z_iz_j)(2-\eta k^1(z_i))^-(2-\eta k^1(z_j)).
\end{align*}
Let 
\[
\nu_{ij} = \ip{E(\cdot) u\circ k(z_i)}{u\circ k(z_j)}.
\]
Clearly $[\nu_{ij}(\tau)] \geq 0$ for every Borel subset $\tau$ of $ \t\setminus\{\omega\}$.  By definition \eqref{defI2},
\[
I_2(z_i,z_j) = \int_{\t\setminus\{\omega\}} a_{ij} \, \dd \nu_{ij}
\]
for $i,j=1,2$.  Moreover, by equation \eqref{63.2},
\[
[I_2(z_i,z_j)]\leq [I_1(z_i,z_j)]+[I_2(z_i,z_j)] =\bbm 1&1\\1&1\ebm.
\]
It follows that 
\beq\label{I2kap}
\bbm  \int_{\t\setminus\{\omega\}} a_{ij} \, \dd \nu_{ij} \ebm = [I_2(z_i,z_j] = \kappa \bbm 1&1\\1&1\ebm
\eeq
for some $\kappa \in [0,1]$.
We may now apply Lemma \ref{meas2} with $X=\t\setminus \{\omega\}$ to deduce that $\kappa=0$ and hence $I_2(z_i,z_j)=0$.
In particular,
\[
0=I_2(z_1,z_1)= \int_{\t\setminus\{\omega\}} a_{11} \, \dd\nu_{11}.
\]
Since $a_{11}>0$ on $\t\setminus\{\omega\}$, it follows that $\nu_{11}(\t\setminus\{\omega\})=0$, which is to say that
\beq\label{2nd}
E(\t\setminus\{\omega\}) u\circ k(z_1) = 0.
\eeq
Since $z_1,z_2$ were chosen arbitrarily in $\t\setminus\{\omega\}$, we have $I_2 \equiv 0$
and therefore, by equation \eqref{63.2},
\beq\label{2I1}
1= I_1= \ip{E(\{\omega\})u\circ k(z)}{u\circ k(w)}
\eeq
for all $z,w\in\d$.  It follows that 
\[
\|E(\{\omega\})u\circ k(z) -E(\{\omega\})u\circ k(w)\|^2=0
\]
for all $z,w$,
and hence that there exists a unit vector $x\in\calm$ such that
\[
E(\{\omega\})u\circ k(z) =x
\]
for all $z\in\d$.

In equation \eqref{63.1}, choose $t=k(w)$ for some $w\in \d$.  Since $\Phi_\omega\circ k=\idd$, we have for all $s\in G$,
\begin{align*}
1-\bar w\ph(s)&= 1-\overline{\ph\circ k(w)} \ph(s) \\
	&=\int_{\{\omega\}}+\int_{\t\setminus\{\omega\}} \left(1-\overline{\Phi_\eta\circ k(w)}\Phi_\eta(s)\right) \ip{\dd E(\eta)u(s)}{u\circ k(w)} \\
	&=(1-\bar w \Phi_\omega(s))\ip{u(s)}{x} + \\
	 &\hspace*{1cm}	 \int_{\t\setminus\{\omega\}} \left(1-\overline{\Phi_\eta\circ k(w)}\Phi_\eta(s)\right) \ip{\dd E(\eta)u(s)}{u\circ k(w)}.
\end{align*}
In view of equation \eqref{2nd}, the scalar spectral measure in the second term on the right hand side is zero on $\t\setminus \{\omega\}$.  Hence the integral is zero, and so, for all $s\in G$ and $w\in\d$,
\begin{align}\label{64.1}
1-\bar w \ph(s)&= (1-\bar w \Phi_\omega(s))\ip{u(s)}{x}.
\end{align}

Put $w=0$ to deduce that 
\[
\ip{u(s)}{x}=1
\]
 for all $s\in G$, then equate coefficients of  $\bar w$ to obtain $\ph=\Phi_\omega$.
Hence, by equation \eqref{gotm},
\[
\psi=m\circ \ph=m\circ \Phi_\omega
\]
as required.
\end{proof}

On combining Theorem \ref{ess1} and Proposition \ref{generic} we obtain the statement in the abstract.
\begin{corollary}\label{genuniq}
Let $\la\in G$.  For a generic direction $\c v$ in $\mathrm{CP}^2$, the solution of the Carath\'eodory problem $\Car (\la, v)$ is essentially unique.
\end{corollary}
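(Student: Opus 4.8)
The plan is to splice together the two results already established. Fix $\la\in G$ and let $V_\la\subseteq\mathrm{CP}^2$ be the dense open set furnished by Proposition \ref{generic}; I take ``generic direction $\c v$'' to mean precisely $\c v\in V_\la$. By the construction of $V_\la$, for every such direction there is exactly one $\omega\in\t$ for which $\Phi_\omega$ solves $\Car(\la,v)$. Before invoking Theorem \ref{ess1} I would record the trivial but necessary bookkeeping point that a ``direction'' is a one-dimensional subspace $\c v$ with $v\neq 0$, so that any representative $v$ makes $\de=(\la,v)$ a \nd tangent to $G$; moreover the set of Carath\'eodory extremals of $\Car(\la,v)$, and the property of essential uniqueness, depend only on $\c v$ and not on the chosen representative, since replacing $v$ by a nonzero scalar multiple merely rescales the pushforward $F_*$ and leaves the extremal condition \eqref{carprob} unchanged. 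Thus ``the solution of $\Car(\la,v)$ is essentially unique'' is a well-defined statement about $\c v$.

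Next I would apply Theorem \ref{ess1} to the \nd tangent $\de=(\la,v)$ for $\c v\in V_\la$: since $\Phi_\omega$ solves $\Car\de$ for a unique $\omega\in\t$, every solution $\psi$ of $\Car\de$ has the form $\psi=m\circ\Phi_\omega$ for some automorphism $m$ of $\d$; in particular any two solutions of $\Car\de$ differ by an automorphism of $\d$, which is exactly essential uniqueness. Since $V_\la$ is dense and open in $\mathrm{CP}^2$, this gives the claim. I do not expect any genuine obstacle: all the substance sits in Theorem \ref{ess1} (whose proof uses the $G$-model formula of Theorem \ref{modelGthm} together with Lemmas \ref{k=c}, \ref{baspos} and \ref{meas2}) and in Proposition \ref{generic} (which rests on the pentachotomy theorem of \cite{aly2016}); the only thing to be careful about in the corollary itself is the passage between tangent vectors and their projective directions noted above.
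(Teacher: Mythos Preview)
Your proof is correct and follows exactly the approach indicated in the paper, which simply states that the corollary is obtained by combining Theorem \ref{ess1} and Proposition \ref{generic}. Your additional remarks on why essential uniqueness depends only on the direction $\c v$ rather than the representative $v$ are sound bookkeeping that the paper leaves implicit.
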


It will sometimes be useful in the sequel to distinguish a particular Carath\'eodory extremal function from a class of functions that are equivalent up to composition with automorphisms of $\d$.
Consider any tangent $\de\in TG$ and any solution $\ph$ of $\Car\de$.
The functions $m\circ\ph$, with $m$ an automorphism of $\d$, also solve $\Car\de$, and among them there is exactly one that has the property
\[
m\circ\ph(\la)=0 \quad \mbox{ and } \quad D_v(m\circ \ph)(\la) > 0,
\]
or equivalently,
\beq\label{special}
(m\circ\ph)_*(\de) = (0,\car{\de}).
\eeq
We shall say that $\ph$ is {\em well aligned at} $\de$ if $\ph_*(\de)=(0,\car{\de})$.

With this terminology the following is a re-statement of Theorem \ref{ess1}.
\begin{corollary}
If $\de$ is a  \nd tangent in $G$ such that $\Phi_\omega$ solves $\Car\de$ for a unique value of $\omega$ in $\t$ then there is a unique well-aligned solution of $\Car\de$.  It is expressible as  $m\circ \Phi_\omega$ for some automorphism $m$ of $\d$.
\
\end{corollary}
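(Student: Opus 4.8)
The plan is to deduce the Corollary from Theorem \ref{ess1} together with the elementary remark, recorded just above the statement, that each $\aut\d$-orbit $\{m\circ\ph:m\in\aut\d\}$ of a solution $\ph$ of $\Car\de$ contains exactly one well-aligned member.

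First I would identify the full set of solutions of $\Car\de$. By hypothesis $\Phi_\omega$ solves $\Car\de$; by the invariance of the Poincar\'e metric under $\aut\d$, so does $m\circ\Phi_\omega$ for every $m\in\aut\d$; and by Theorem \ref{ess1}, conversely every solution of $\Car\de$ has this form. Hence the solutions of $\Car\de$ are precisely the functions $m\circ\Phi_\omega$, $m\in\aut\d$ --- a single $\aut\d$-orbit. Applying the remark with $\ph=\Phi_\omega$ then shows this orbit contains exactly one well-aligned function, which is therefore the unique well-aligned solution of $\Car\de$, and it is of the announced form $m\circ\Phi_\omega$.

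It remains to justify the remark itself. For existence, given any solution $\psi$, write $\psi_*(\de)=(a,b)\in T\d$; extremality of $\psi$ gives $|b|/(1-|a|^2)=\car{\de}$. Picking $m_0\in\aut\d$ with $m_0(a)=0$ and $m_0'(a)b>0$, and using $|m_0'(a)|=(1-|a|^2)\inv$, one gets $(m_0\circ\psi)_*(\de)=(0,\car{\de})$, so $m_0\circ\psi$ is well aligned. For uniqueness within the orbit, if $\ph_1$ and $\ph_2$ are both well-aligned and $\ph_2=m\circ\ph_1$ with $m\in\aut\d$, then pushing $\de$ forward (and using well-alignment of $\ph_1$ and $\ph_2$) gives $m(0)=0$ and $m'(0)\car{\de}=\car{\de}$; since $\de$ is \nd and $\car{\cdot}$ is a metric on $TG$ we have $\car{\de}>0$, hence $m'(0)=1$, and an automorphism of $\d$ fixing $0$ with derivative $1$ there is $\id$, so $\ph_1=\ph_2$.

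There is no genuine obstacle: the proof is bookkeeping with the Poincar\'e metric on top of Theorem \ref{ess1}. The only step that deserves a moment's care is the normalization --- checking that post-composition with an automorphism of $\d$ preserves the pushforward magnitude $|b|/(1-|a|^2)$ and that $m_0$ is determined by the conditions $m_0(a)=0$, $m_0'(a)b>0$ --- which is the standard Schwarz--Pick computation for M\"obius self-maps of $\d$.
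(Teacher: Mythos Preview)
Your proof is correct and follows exactly the approach the paper intends: the paper presents this corollary as merely ``a re-statement of Theorem \ref{ess1}'' in the language of well-aligned solutions, with no separate proof given, and you have simply spelled out the routine normalization details (existence and uniqueness of the well-aligned representative in an $\aut\d$-orbit) that the paper records in the paragraph preceding the corollary.
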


\section{Royal tangents}\label{royal}
At the opposite extreme from the tangents studied in the last section are the royal tangents to $G$.  Recall that these have the form
\beq\label{roytgt}
\de=\left((2z,z^2),2c(1,z)\right)
\eeq
for some $z\in\d$ and nonzero complex number $c$.  As we observed in Section \ref{5types},
\[
\car{\de}= \frac{|c|}{1-|z|^2}
\]
and {\em all} $\Phi_\omega, \omega\in\t$, solve $\Car\de$.  In this section we shall describe {\em all} extremal functions for $\Car\de$ for royal tangents $\de$,
not just those of the form $\Phi_\omega$.
\begin{theorem}\label{royalthm}
Let $\de\in TG$ be the royal tangent
\beq\label{roytang}
\de=\left((2z,z^2),2c(1,z)\right)
\eeq
for some $z\in\d$ and $c\in\c\setminus\{0\}$.  A function $\ph\in\d(G)$ solves $\Car\de$ if and only if there exists an automorphism $m$ of $\d$ and $\Psi\in\ess(G)$ such that, for all $s\in G$,
\beq\label{theformula}
\ph(s)=m\left(  \half s^1 +\tfrac 14 ((s^1)^2-4s^2)\frac{\Psi(s)}{1-\half s^1\Psi(s)}\right).
\eeq
\end{theorem}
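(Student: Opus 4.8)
Write $\rho(z)=(2z,z^2)$, a parametrisation of the royal variety $\royal$, and denote by $g_\Psi$ the function in \eqref{theformula} with $m=\idd$. A one–line simplification collapses \eqref{theformula} to
\[
g_\Psi(s)=\frac{\half s^1-s^2\Psi(s)}{1-\half s^1\Psi(s)},
\]
which is holomorphic on $G$ because $|\half s^1|<1$ and $|\Psi(s)|\le 1$ there; moreover $g_\Psi\circ\rho=\idd$, and for the royal tangent $\de=(\la,v)$ of \eqref{roytang} a direct differentiation gives $g_\Psi(\la)=z$ and $D_vg_\Psi(\la)=c$, both independent of $\Psi$. Since the solution set of $\Car\de$ and the family $\{m\circ g_\Psi\}$ are each closed under post–composition with automorphisms of $\d$, and every $g_\Psi$ already satisfies $g_\Psi\circ\rho=\idd$, the theorem reduces to two assertions: (i) $g_\Psi$ solves $\Car\de$ for every $\Psi\in\ess(G)$; and (ii) if $\ph\in\d(G)$ solves $\Car\de$ and $\ph\circ\rho=\idd$, then $\ph=g_\Psi$ for some $\Psi\in\ess(G)$.

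For (i) the crux is that $g_\Psi\in\ess(G)$. I would introduce $\Lambda(s,\mu)=\dfrac{\half s^1-s^2\mu}{1-\half s^1\mu}$ on the domain $G\times\d\subset\c^3$; it is holomorphic there, and for $\mu\in\t$ one has $\Lambda(s,\mu)=-\Phi_\mu(s)$, which lies in $\d$ for all $s\in G$, so the maximum principle applied in the variable $\mu$ gives $|\Lambda|\le 1$ on $G\times\d$, i.e.\ $\Lambda\in\ess(G\times\d)$. Hence $g_\Psi=\Lambda\circ(\id_G,\Psi)\in\ess(G)$ whenever $\Psi(G)\subseteq\d$ (and if $\Psi$ is a unimodular constant then $g_\Psi=-\Phi_\Psi\in\d(G)$ directly). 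As $D_vg_\Psi(\la)=c\neq0$, $g_\Psi$ is non–constant, hence $g_\Psi\in\d(G)$; combining $g_\Psi(\la)=z$, $D_vg_\Psi(\la)=c$ and $\car{\de}=|c|/(1-|z|^2)$ shows $\dfrac{|D_vg_\Psi(\la)|}{1-|g_\Psi(\la)|^2}=\car{\de}$, so $g_\Psi$, and then $m\circ g_\Psi$, solves $\Car\de$.

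For (ii) I would first show $\ph\circ\rho$ is an automorphism of $\d$ for every solution $\ph$ of $\Car\de$. By Lemma \ref{k=c} there is $k\in G(\d)$ with $\ph\circ k=\idd$; since $\half s^1$ and all $\Phi_\omega$ ($\omega\in\t$) solve $\Car\de$ (Section \ref{5types}), Lemma \ref{k=c} makes $n:=\half k^1$ and every $\Phi_\omega\circ k=\dfrac{\omega k^2-n}{1-\omega n}$ automorphisms of $\d$; comparing boundary moduli on $\t$ over all $\omega$ forces $k^2\overline{k^1}=k^1$ and hence $k^2=n^2$, so $k=\rho\circ n$ and $\ph\circ\rho=n^{-1}$ is an automorphism. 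Replacing $\ph$ by $n\circ\ph$ we may assume $\ph\circ\rho=\idd$. Then $\ph-\half s^1$ vanishes on $\royal=\{(s^1)^2=4s^2\}$; lifting to $\d^2$ through the symmetrisation $(z,w)\mapsto(z+w,zw)$, $\ph$ becomes a symmetric Schur function whose difference from $\half(z+w)$ vanishes on the diagonal, hence is divisible by $(z-w)^2$, and the symmetric quotient descends to $h\in\calo(G)$ with $\ph-\half s^1=((s^1)^2-4s^2)h$. Setting $\Psi:=\dfrac{2\ph-s^1}{s^1\ph-2s^2}=\dfrac{4h}{1+2s^1h}$ (a priori meromorphic on $G$), elementary algebra gives $\ph=g_\Psi$ identically, so everything comes down to proving $\Psi\in\ess(G)$.

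This last step is where realization theory is essential, and it is the main obstacle. Take a unitary $G$–model $(\calm,T,u)$ for $\ph$ (Theorem \ref{modelGthm}) with $T=\int_\t\eta\,\dd E(\eta)$, so that $s_T=\int_\t\Phi_\eta(s)\,\dd E(\eta)$, and substitute $s=\rho(z)$, $t=\rho(w)$ into \eqref{modelform}: by $\ph\circ\rho=\idd$ and the identity $\Phi_\eta\circ\rho=-\idd$ (Section \ref{5types}) the factor $1-\overline{\Phi_\eta(t)}\Phi_\eta(s)$ becomes $1-\bar wz$ for every $\eta$, and after cancelling $1-\bar wz$ one gets $\ip{u\circ\rho(z)}{u\circ\rho(w)}\equiv1$; thus $u\circ\rho$ is a constant unit vector $x_0$, and then setting $t=\rho(w)$ for general $s$ yields $\ip{u(s)}{x_0}\equiv1$ and $\ph(s)=-\ip{s_Tu(s)}{x_0}$, whence $\Psi(s)=\ip{T\hat u(s)}{x_0}/\ip{\hat u(s)}{x_0}$ with $\hat u(s):=(2-s^1T)^{-1}u(s)$. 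The plan is then to convert these relations, by a lurking–isometry argument on \eqref{modelform}, into a bona fide $G$–model for $\Psi$, so that $\Psi\in\ess(G)$ by Theorem \ref{modelGthm}; the delicate points will be the non–vanishing of $\ip{\hat u(s)}{x_0}$ on $G$ and checking that the resulting kernel has the shape required by Definition \ref{defGmodel}. (On $\royal$ this is already transparent: there $\hat u(\rho(z))=\tfrac12(1-zT)^{-1}x_0$, so $\Psi\circ\rho$ is a Möbius transform of the Cauchy transform $\int_\t(1-z\eta)^{-1}\,\dd\nu(\eta)$ of the positive measure $\dd\nu=\ip{\dd E(\cdot)x_0}{x_0}$, and $|\Psi\circ\rho|\le1$ follows because that Cauchy transform is an average of points on the circle $\{(1-z\eta)^{-1}:\eta\in\t\}$ and so lies in the closed disc it bounds; the general case needs the model argument.) Granting $\Psi\in\ess(G)$, the original un–normalised $\ph$ equals $n^{-1}\circ g_\Psi$, which is of the form \eqref{theformula} with $m=n^{-1}$, and the proof is complete.
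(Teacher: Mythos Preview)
Your ``if'' direction is clean and correct: the maximum-principle argument via $\Lambda(s,\mu)=-\Phi_\mu(s)$ for $\mu\in\t$ is more direct than anything in the paper, and the evaluation $g_\Psi\circ\rho=\idd$ settles extremality immediately.

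The ``only if'' direction, however, has a genuine gap --- and you flag it yourself. Everything is correctly reduced to proving $\Psi\in\ess(G)$, and your formula $\Psi(s)=\ip{T\hat u(s)}{x_0}/\ip{\hat u(s)}{x_0}$ is right (your identities $2A-s^1B=1$ and $\ph=s^1A-2s^2B$ do yield $2\ph-s^1=((s^1)^2-4s^2)B$ and $s^1\ph-2s^2=((s^1)^2-4s^2)A$). But the step from there to $|\Psi|\le 1$ is not carried out: you offer only a plan (``convert these relations, by a lurking-isometry argument, into a bona fide $G$-model for $\Psi$'') and name the obstacles (non-vanishing of $A$; the shape of the resulting kernel). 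These obstacles are real. For instance, the natural candidate kernel computes to $\ip{(P-T^*PT)\hat u(s)}{\hat u(t)}$ with $P=x_0x_0^*$, and $P-T^*PT$ is a difference of rank-one projections, hence \emph{not} positive in general --- so the naive route to a $G$-model for $\Psi$ fails, and a more delicate construction would be needed. The restriction-to-$\calr$ argument you give is nice but, as you say, does not cover the general case. A smaller gap: your proof that $k=\rho\circ n$ (via ``comparing boundary moduli on $\t$'') tacitly assumes $k$ extends continuously to $\t$; this is true for complex geodesics of $G$ but needs citation.

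The paper avoids the hard step entirely by a different route. It lifts $\ph$ to the symmetric Schur function $f=\ph\circ\pi$ on $\d^2$, observes that $\ph$ solves $\Car\de$ iff $f$ solves the two-point Nevanlinna--Pick problem $(0,0)\mapsto 0$, $(\tfrac12,\tfrac12)\mapsto\tfrac12$ on the bidisc, and then invokes the Agler--McCarthy parametrisation \cite[Subsection~11.6]{AgM2} of \emph{all} solutions to that problem: $f(\la)=t\la^1+(1-t)\la^2+t(1-t)(\la^1-\la^2)^2\Theta(\la)/(1-[(1-t)\la^1+t\la^2]\Theta(\la))$ with $t\in[0,1]$ and $\Theta\in\ess(\d^2)$. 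Symmetry forces $t=\tfrac12$ and $\Theta$ symmetric, whence $\Theta=\Psi\circ\pi$ with $\Psi\in\ess(G)$, and formula~\eqref{theformula} drops out. Thus the paper outsources precisely the step you leave open --- the existence of $\Psi\in\ess(G)$ --- to a nontrivial realization-theoretic theorem on the bidisc, and gets both directions of the equivalence at once.
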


\begin{proof}
  We shall lift the problem
$\Car\de$ to a Carath\'eodory problem on the bidisc $\d^2$, where we can use the results of \cite{AgM2} on the Nevanlinna-Pick problem on the bidisc.  

Let $\pi:\d^2\to G$ be the `symmetrization map', 
\[
\pi(\la^1,\la^2)=(\la^1+\la^2,\la^1\la^2)
\]
and let $k:\d\to\d^2$ be given by $k(\zeta)=(\zeta,\zeta)$ for $\zeta\in\d$.

Consider the royal tangent $\de$ of equation \eqref{roytang} and let
\[
\de_{zc}=\left((z,z),(c,c)\right) \in T\d^2.
\]
Observe that 
\[
\pi'(\la)= \bbm 1 & 1\\ \la^2 & \la^1 \ebm,
\]
and so
\beq\label{dedezc}
\pi_*(\de_{zc})= \left(\pi(z,z),\pi'(z,z)(c,c)\right)= \left((2z,z^2),2c(1,z)\right)=\de,
\eeq
while
\[
k_*((z,c))=(k(z),k'(z)c)=\left((z,z),(c,c)\right)=\de_{zc}.
\]

Consider any $\ph\in\d(G)$.
Figure 1 illustrates the situation.
\begin{center}
\begin{figure}
\includegraphics {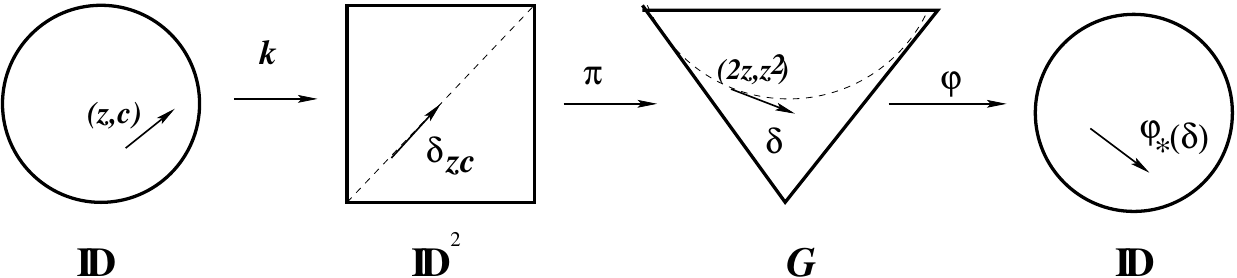}
\caption{}
\end{figure}
\end{center}

It is known that every Carath\'eodory problem on the bidisc is solved by one of the two
co-ordinate functions $F_j(\la)=\la^j$ for $j=1$ or $2$ (for a proof see, for example, \cite[Theorem 2.3]{aly2016}).  Thus
\begin{align*}
\car{\de_{zc}}^{\d^2}&= \max_{j=1,2} \frac{|D_{(c,c)}F_j(z,z)|}{1-|F_j(z,z)|^2} \\
	&=\frac{|c|}{1-|z|^2} \\
	&=\car{\de}.
\end{align*}
Here of course the superscript $\d^2$ indicates the Carath\'eodory extremal problem on the bidisc.

Hence, for $\ph\in\d(G)$,
\begin{align}\label{equiv1}
 \ph\circ\pi\mbox{ solves } \Car\de_{zc} &\iff |(\ph\circ\pi)_*(\de_{zc})|=\frac{|c|}{1-|z|^2} \notag\\
	&\iff |\ph_*\circ \pi_*(\de_{zc})|=\frac{|c|}{1-|z|^2} \quad \mbox{ by the chain rule}\notag\\
	&\iff |\ph_*(\de)|=\frac{|c|}{1-|z|^2} \quad \hspace*{1.2cm}\mbox{ by equation \eqref{dedezc}}  \notag  \\
	&\iff \ph\mbox{ solves }\Car\de.
\end{align}

Next observe that a function $\psi\in\d(\d^2)$ solves $\Car\de_{zc}$ if and only if $\psi\circ k$ is an automorphism of $\d$.  For if $\psi\circ k$ is an automorphism of $\d$ then it satisfies
\[
|(z,c)| = |(\psi\circ k)_*(z,c)|= |\psi_*\circ k_*(z,c)|= |\psi_*(\de_{zc})|,
\]
which is to say that $\psi$ solves $\Car\de_{zc}$.  Conversely, if $\psi$ solves $\Car\de_{zc}$ then $\psi\circ k$ is an analytic self-map of $\d$ that preserves the Poincar\'e metric of a \nd tangent to $\d$, and is therefore (by the Schwarz-Pick lemma) an automorphism of $\d$.  On combining this observation with equivalence \eqref{equiv1} we deduce that 
\begin{align}\label{equiv2}
\ph \mbox{ solves } \Car\de\iff  & \mbox{ there exists an automorphism }m \mbox{ of } \d \notag \\
	& \mbox{ such that } m\inv\circ \ph\circ\pi \circ k= \idd.
\end{align}

For a function $f\in\d(\d^2)$, it is easy to see that $f\circ k=\idd$ if and only if $f$ solves the Nevanlinna-Pick problem
\beq\label{NPD2}
(0,0) \mapsto 0, \qquad (\half,\half) \mapsto \half.
\eeq
See \cite[Subsection 11.5]{AgM2} for the Nevanlinna-Pick problem in the bidisc.  
Hence
\begin{align}\label{equiv3}
\ph \mbox{ solves } \Car\de  & \iff  \mbox{ there exists an automorphism }m \mbox{ of } \d \mbox{ such that } \notag\\
	& m\inv\circ \ph\circ\pi\mbox{ solves the Nevanlinna-Pick problem \eqref{NPD2}.}
\end{align}

In \cite[Subsection 11.6]{AgM2} Agler and McCarthy use realization theory to show the following.

{\em 
A function $f\in\ess(\d^2)$ satisfies the interpolation conditions
\beq\label{interp}
f(0,0)=0,  \qquad f(\half,\half)=\half
\eeq
if and only if there exist $t\in[0,1]$ and  $\Theta$ in the Schur class of the bidisc such that, for all $\la\in\d^2$,
\beq\label{formpsi}
f(\la)= t\la^1+(1-t)\la^2+ t(1-t)(\la^1-\la^2)^2 \frac{\Theta(\la)}{1-[(1-t)\la^1+t\la^2]\Theta(\la)}.
\eeq
}

Inspection of the formula \eqref{formpsi} reveals that $f$ is symmetric
if and only if  $t=\half$ and $\Theta$ is symmetric.  Hence the symmetric functions in $\ess(\d^2)$ that satisfy the conditions
\eqref{interp} are those given by
\begin{align}\label{formf}
f(\la) &=\half \la^1+\half\la^2 + \tfrac 14  (\la^1-\la^2)^2 \frac{\Theta(\la)}{1-\half(\la^1+\la^2)\Theta(\la)}
\end{align}
for some symmetric $\Theta \in\ess(\d^2)$.  Such a $\Theta$ induces a unique function $\Psi\in\ess(G)$ such that
$\Theta=\Psi\circ\pi$, and we may write the symmetric solutions $f$ of the problem \eqref{interp} in the form
$f=\tilde f\circ\pi$ where, for all $s=(s^1,s^2)$ in $G$,
\beq\label{chipsi}
\tilde f(s)= \half s^1 +\tfrac 14 ((s^1)^2-4s^2)\frac{\Psi(s)}{1-\half s^1\Psi(s)}.
\eeq

Let $\ph$ solve $\Car\de$.  By the equivalence \eqref{equiv3}, there exists an automorphism $m$ of $\d$
such that $ m\inv\circ \ph\circ\pi$ solves the Nevanlinna-Pick problem \eqref{NPD2}.  
Clearly $m\inv\circ\ph\circ\pi$ is symmetric.
Hence there exists  $\Psi\in\ess(G)$ such that, for all $\la\in\d^2$,
\beq\label{formpsibis}
m\inv\circ\ph(s)=  \half s^1 +\tfrac 14 ((s^1)^2-4s^2)\frac{\Psi(s)}{1-\half s^1\Psi(s)}.
\eeq
Thus $\ph$ is indeed given by the formula \eqref{theformula}.

Conversely, suppose that for some automorphism $m$ of $\d$ and $\Psi\in\ess(G)$, a function $\ph$ is defined by
equation \eqref{theformula}.  Let $f=m\inv\circ\ph\circ\pi$  Then $f$ is given by the formula \eqref{formf},
where $\Theta=\Psi\circ\pi$. Hence $f$ is a symmetric function that satisfies the interpolation conditions \eqref{interp}.
By the equivalence \eqref{equiv3}, $\ph$ solves $\Car\de$.
\end{proof}

\section{Flat tangents}\label{flat}
In this section we shall give a description of a large class of Carath\'eodory extremals for a flat tangent.
Recall that a flat tangent has the form
\beq\label{aflatgeo}
\de=\left( (\beta+\bar\beta z,z), c(\bar\beta,1)\right)
\eeq
for some $z\in\d$ and $c\neq 0$, where $\beta\in\d$.  Such a tangent touches the `flat geodesic'
\[
F_\beta\df \{ (\beta+\bar\beta w, w):w\in\d\}.
\]
The description depends on a remarkable property of sets of the form $\calr\cup F_\beta, \ \beta\in\d$: they have the norm-preserving extension property in $G$ \cite[Theorem 10.1]{aly2016}.  That is, if $g$ is any bounded analytic function on the variety $\calr\cup F_\beta$, then there exists an analytic function $\tilde g$ on $G$ such that $g=\tilde g|\calr\cup F_\beta$ and the supremum norms of $g$ and $\tilde g$ coincide.  Indeed, the proof of \cite[Theorem 10.1]{aly2016} gives an explicit formula for one such $\tilde g$ in terms of a Herglotz-type integral.  Let us call the norm-preserving extension $\tilde g$ of $g$ constructed in \cite[Chapter 10]{aly2016} the {\em special extension} of $g$ to $G$.

It is a simple calculation to show that $\calr$ and $F_\beta$ have a single point in common.

By equation \eqref{entertain}, for $\de$ in equation \eqref{aflatgeo}
\[
\car{\de} =\frac{|c|}{1-|z|^2}.
\]

\begin{theorem}
Let $\de$ be the flat tangent
\beq\label{aflatgeo2}
\de=\left( (\beta+\bar\beta z,z), c(\bar\beta,1)\right)
\eeq
to $G$, where $\beta\in\d$ and $c\in\c\setminus\{0\}$.  Let $\zeta,\eta$ be the points in $\d$ such that
\[
(2\zeta,\zeta^2)=(\beta+\bar\beta\eta, \eta) \in  \calr\cap F_\beta
\]
and let $m$ be the unique automorphism of $\d$ such that  
\[
m_*((z,c))=(0,\car{\de}).
\]  

For every function $h\in\ess(\d)$ such that $h(\zeta)=m(\eta)$ the special extension $\tilde g$ to $G$ of the function
\beq\label{defg}
g: \calr\cup F_\beta\to \d, \quad (2w,w^2)\mapsto h(w), \quad (\beta+\bar\beta w,w) \mapsto m(w)
\eeq
for $w\in\d$ is a well-aligned Carath\'eodory extremal function for $\de$.
\end{theorem}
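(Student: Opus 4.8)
The plan is to show that the special extension $\tilde g$ is a well-aligned Carath\'eodory extremal for $\de$ by exhibiting a holomorphic left inverse to $\tilde g$ along the flat geodesic, exactly as in Lemma~\ref{k=c}. First I would record the relevant geometry: by hypothesis $\calr$ and $F_\beta$ meet in the single point $(2\zeta,\zeta^2)=(\beta+\bar\beta\eta,\eta)$, so the two prescriptions in \eqref{defg} agree at that point precisely because $h(\zeta)=m(\eta)$; hence $g$ is a well-defined analytic function on the variety $\calr\cup F_\beta$, and since $\|h\|_\infty\le 1$ and $m$ is an automorphism of $\d$ we have $\|g\|_\infty\le 1$, so the special extension $\tilde g\in\ess(G)$ is defined and satisfies $\tilde g|_{\calr\cup F_\beta}=g$, $\|\tilde g\|_\infty\le 1$.

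Next I would parametrize the flat geodesic by $k_\beta:\d\to G$, $k_\beta(w)=(\beta+\bar\beta w,w)$, so that $k_\beta$ is holomorphic and $(k_\beta)_*((z,1))$ points in the direction of $\de$; more precisely $(k_\beta)_*$ carries $(z,c)\in T\d$ to $\big((\beta+\bar\beta z,z),c(\bar\beta,1)\big)=\de$. Restricting \eqref{defg} to $F_\beta$ gives $\tilde g\circ k_\beta(w)=g(\beta+\bar\beta w,w)=m(w)$ for all $w\in\d$, i.e. $\tilde g\circ k_\beta=m$, an automorphism of $\d$. Therefore, writing $\psi\df m\inv\circ\tilde g\in\ess(G)$, we get $\psi\circ k_\beta=\idd$. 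By the Schwarz--Pick lemma, $\psi$ preserves the Poincar\'e length of the tangent $(k_\beta)_*((z,1))$ to $G$, hence $|\psi_*(\de)|=|(z,1)|=\frac{1}{1-|z|^2}$; scaling by $c$, $|\psi_*(\de)|=\frac{|c|}{1-|z|^2}=\car{\de}$ by equation~\eqref{entertain}. Thus $\psi$ — and hence $\tilde g=m\circ\psi$, by invariance of the Poincar\'e metric under $m$ — solves $\Car\de$.

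It remains to check that $\tilde g$ is \emph{well aligned} at $\de$, i.e. $\tilde g_*(\de)=(0,\car\de)$. Since $\tilde g\circ k_\beta=m$ and $(k_\beta)_*$ maps $(z,c)$ to $\de$, the chain rule gives $\tilde g_*(\de)=m_*\big((k_\beta)_*((z,c))\cdot\big)$ — concretely $\tilde g_*(\de)=m_*((z,c))=(0,\car\de)$ by the defining property of $m$ in the statement. This is the alignment condition \eqref{special}, so $\tilde g$ is a well-aligned Carath\'eodory extremal for $\de$, as required.

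The only genuine content beyond routine bookkeeping is the invocation of the norm-preserving extension property of $\calr\cup F_\beta$ from \cite[Theorem 10.1]{aly2016}: it is what guarantees that $\tilde g$ exists, lies in $\ess(G)$, and restricts to $g$ on $F_\beta$ — and the restriction to $F_\beta$ is exactly what makes $\tilde g\circ k_\beta=m$ an automorphism. Everything else (consistency of $g$ at the intersection point, the Schwarz--Pick computation, the alignment identity) is elementary. So the main "obstacle" is simply being careful that the special extension from \cite{aly2016} does genuinely restrict to the prescribed $g$ on the whole variety $\calr\cup F_\beta$, which is part of the cited theorem's conclusion.
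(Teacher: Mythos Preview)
Your proof is correct and follows essentially the same route as the paper: parametrize $F_\beta$ by $k_\beta(w)=(\beta+\bar\beta w,w)$, use the norm-preserving extension property of $\calr\cup F_\beta$ to get $\tilde g\in\ess(G)$ with $\tilde g\circ k_\beta=m$, and then compute $\tilde g_*(\de)=m_*((z,c))=(0,\car\de)$ via the chain rule. The paper's proof is slightly more economical in that it goes directly to the well-alignment identity $\tilde g_*(\de)=(0,\car\de)$, which simultaneously establishes extremality and alignment, rather than first passing through $\psi=m\inv\circ\tilde g$; but this is a cosmetic difference only.
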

\begin{proof}
First observe that  there is indeed a unique automorphism $m$ of $\d$ such that $m_*((z,c))=(0,\car{\de})$, by the Schwarz-Pick Lemma.  
Let
 \[
k(w)=(\beta+\bar\beta w,w) \quad \mbox{ for } w\in\d,
\]
so that $F_\beta=k(\d)$ and $k_*((z,c))=\de$.   By the definition \eqref{defg} of $g$, $g\circ k=m$.

Consider any function $h\in\ess(\d)$ such that $h(\zeta)=m(\eta)$.  By \cite[Lemma 10.5]{aly2016}, the function $g$ defined by equations \eqref{defg} is analytic on $\calr\cup F_\beta$. 

We claim that the special extension $\tilde g$ of  $ g$ to $G$ is a well-aligned Carath\'eodory extremal function for $\de$.  By \cite[Theorem 10.1]{aly2016}, $\tilde g \in\d(G)$.  Moreover
\begin{align*}
(\tilde g)_*(\de) &=(\tilde g)_*\circ k_*((z,c)) \\
	&=(\tilde g\circ k)_*((z,c)) \\
	&=(g\circ k)_*((z,c)) \\
	&=m_*((z,c))\\
	&= (0,\car{\de})
\end{align*}
as required. Thus the Poincar\'e metric of $(\tilde g)_*(\de)$ on $T\d$ is
\[
|(\tilde g)_*(\de)| = |(0,\car{\de})| = \car{\de}. 
\]
Therefore 
$(\tilde g)_*$ is a well aligned Carath\'eodory extremal function for $\de$.
\end{proof}
Clearly the map $g\mapsto \tilde g$ is injective, and so this procedure yields a large class of Carath\'eodory extremals for $\de$, parametrized by the Schur class.

\begin{remark} \rm
In the converse direction, if $\ph$ is any well-aligned Carath\'eodory extremal for $\de$, then $\ph$ is a norm-preserving extension of its restriction to $\calr\cup F_\beta$, which is a function of the type \eqref{defg}.  Thus the class of all well-aligned Carath\'eodory extremal functions for $\de$ is given by the set of
 norm-preserving analytic extensions to $G$ of $g$ in equation \eqref{defg}, as $h$ ranges over functions in the Schur class taking the value $m(\eta)$ at $\zeta$.  Typically there will be many such extensions of $g$, as can be seen from the proof of  \cite[Theorem 10.1]{aly2016}.  An extension is obtained as the Cayley transform of a function defined by a Herglotz-type integral with respect to a probability measure $\mu$ on $\t^2$. In the proof of  \cite[Lemma 10.8]{aly2016}, $\mu$ is chosen to be the product of two measures $\mu_\calr$ and $\mu_\calf$ on $\t$; examination of the proof shows that one can equally well choose any measure $\mu$ on $\t^2$ such that
\[
\mu(A\times\t) = \mu_\calr(A), \quad \mu(\t\times A)= \mu_\calf(A)\quad \mbox{ for all Borel sets } A \mbox{ in }\t.
\]
Thus each choice of $h\in\ess(\d)$ satisfying $h(\zeta)=m(\eta)$ can be expected to give rise to many well-aligned Carath\'eodory extremals for $\de$.
\end{remark}

\section{Purely balanced tangents}\label{purelybalanced}
In this section we find a large class of Carath\'eodory extremals for purely balanced tangents in $G$ by exploiting an embedding of $G$ into the bidisc.

\begin{lemma}\label{injective}
Let
\[
\Phi=(\Phi_{\omega_1}, \Phi_{\omega_2}): G \to \d^2
\]
where $\omega_1,\omega_2$ are distinct points in $\t$.  Then
$\Phi$ is an injective map from $G$ to $\d^2$.
\end{lemma}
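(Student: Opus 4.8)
The plan is to show injectivity of $\Phi=(\Phi_{\omega_1},\Phi_{\omega_2})$ by a direct computation: suppose $\Phi(s)=\Phi(t)$ for $s=(s^1,s^2), t=(t^1,t^2)\in G$, and deduce $s=t$. Recall that
\[
\Phi_\omega(s^1,s^2)=\frac{2\omega s^2-s^1}{2-\omega s^1},
\]
so the equation $\Phi_{\omega_j}(s)=\Phi_{\omega_j}(t)$ for $j=1,2$ gives two equations. Clearing denominators, $\Phi_{\omega_j}(s)=\Phi_{\omega_j}(t)$ becomes
\[
(2\omega_j s^2-s^1)(2-\omega_j t^1)=(2\omega_j t^2-t^1)(2-\omega_j s^1).
\]
Expanding, the terms $\omega_j^2 s^1 t^1$ cancel on both sides, and one is left with a relation that is \emph{affine} in $\omega_j$: writing it out, $4\omega_j s^2 - 2 s^1 - 2\omega_j^2 s^2 t^1 + \omega_j s^1 t^1 = 4\omega_j t^2 - 2 t^1 - 2\omega_j^2 t^2 s^1 + \omega_j s^1 t^1$, and after the $\omega_j s^1 t^1$ and $\omega_j^2$ terms are collected this reduces to
\[
-2(s^1-t^1) + 2\omega_j\big( (s^2-t^2) - \omega_j(s^2 t^1 - t^2 s^1)\big)=0,
\]
i.e. $\omega_j^2(s^2 t^1 - t^2 s^1) - \omega_j(s^2-t^2) + (s^1-t^1)=0$ for $j=1,2$ (up to the overall factor).

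Thus both $\omega_1$ and $\omega_2$ are roots of the single quadratic polynomial
\[
p(\omega)\df (s^2 t^1 - t^2 s^1)\,\omega^2 - (s^2-t^2)\,\omega + (s^1-t^1).
\]
Since $\omega_1\neq\omega_2$, a quadratic with two distinct roots is either identically zero or has leading coefficient nonzero with exactly those two roots; but since $p$ has degree $\le 2$ and vanishes at two distinct points, if it is not identically zero it must be a nonzero scalar times $(\omega-\omega_1)(\omega-\omega_2)$, which is a genuine degree-$2$ polynomial — that is consistent, so I cannot yet conclude $p\equiv 0$. This is the crux: I need to rule out the case where $p$ is a nonzero multiple of $(\omega-\omega_1)(\omega-\omega_2)$. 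The key extra input is that $\omega_1,\omega_2\in\t$ while the third root structure is constrained because the coefficients of $p$ are built from points of $G$. Concretely, comparing $p(\omega)=\kappa(\omega-\omega_1)(\omega-\omega_2)$ coefficient-by-coefficient gives $s^1-t^1=\kappa\omega_1\omega_2$, $s^2-t^2=\kappa(\omega_1+\omega_2)$, and $s^2 t^1 - t^2 s^1 = \kappa$. I would then use these three relations together with the fact that $G=\{(z+w,zw):|z|,|w|<1\}$ to force $\kappa=0$.

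The main obstacle is exactly this last elimination step, and I expect to handle it as follows. Write $s=(z_1+z_2, z_1 z_2)$ and $t=(w_1+w_2, w_1 w_2)$ with all four points in $\d$. A cleaner route avoids coordinates on $G$ altogether: instead of clearing denominators symmetrically, note that $\Phi_\omega$ for fixed $\omega\in\t$ is, on $G$, the composition $\Phi_\omega(z+w,zw)=-\,B_\omega(z)B_\omega(w)$ up to sign, where $B_\omega$ is a disc automorphism (this is the standard fact that $\Phi_\omega\circ h_m$ has degree $\le 2$, and more basically $\Phi_\omega(z+w,zw) = -\frac{(z-\bar\omega^{-1}? )}{\cdots}$ — precisely, $\Phi_\omega(z+w,zw)=\frac{2\omega zw-(z+w)}{2-\omega(z+w)}$ factors as a product of two Möbius maps in $z$ and $w$). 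Then $\Phi(s)=\Phi(t)$ says the unordered pair $\{B_{\omega_1}(z_1),B_{\omega_1}(z_2)\}$ has the same product as $\{B_{\omega_1}(w_1),B_{\omega_1}(w_2)\}$, and similarly for $\omega_2$; combined with injectivity of each $B_{\omega_j}$ one pushes this to a statement about symmetric functions and concludes $\{z_1,z_2\}=\{w_1,w_2\}$, hence $s=t$. I would check carefully which of these two routes (the quadratic-polynomial argument with the $\kappa=0$ forcing, or the factorization argument) is shortest; I expect the factorization of $\Phi_\omega$ over $\pi(\d^2)$ to be the decisive simplification, and the only real work is verifying that factorization explicitly and then the elementary symmetric-function bookkeeping.
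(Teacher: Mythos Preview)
Your first route---setting up a quadratic in $\omega$ with roots $\omega_1,\omega_2$ and then trying to force the coefficients to vanish---is exactly the paper's approach, and you have correctly identified the crux. Two remarks on that route.

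First, a small arithmetic slip: after clearing denominators and cancelling, the correct relation is
\[
(s^2 t^1 - t^2 s^1)\,\omega_j^2 \;-\; 2(s^2-t^2)\,\omega_j \;+\; (s^1-t^1)\;=\;0,\qquad j=1,2,
\]
so your polynomial $p$ should have middle coefficient $-2(s^2-t^2)$, not $-(s^2-t^2)$. This does not affect the strategy.

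Second, the step you left open---ruling out $p(\omega)=\kappa(\omega-\omega_1)(\omega-\omega_2)$ with $\kappa\neq 0$---is precisely where the paper's idea enters, and it is worth recording since neither of your suggested finishes quite reaches it. From your three Vieta relations (with the corrected factor of~$2$) one eliminates $\kappa$ to find that, if $\kappa\neq 0$, then \emph{both} $(s^1,s^2)$ and $(t^1,t^2)$ lie on the complex line
\[
\ell\;=\;\{(X,Y)\in\c^2:\ (\omega_1+\omega_2)X - 2\omega_1\omega_2 Y = 2\}.
\]
(Concretely: substitute $s^1=t^1+\kappa\omega_1\omega_2$ and $s^2=t^2+\tfrac12\kappa(\omega_1+\omega_2)$ into $\kappa=s^2t^1-t^2s^1$ and cancel $\kappa$.) The paper then shows $\ell\cap G=\emptyset$: any point of $G$ satisfies $s^1=\beta+\bar\beta s^2$ for some $\beta\in\d$, and solving this together with the equation of $\ell$ forces $|s^2|=1$, a contradiction. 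So the elimination you anticipated is carried out via this geometric observation rather than by writing $s,t$ as symmetric functions of bidisc coordinates.

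Your second route, by contrast, rests on a false premise. The function
\[
\Phi_\omega\circ\pi(z,w)\;=\;\frac{2\omega zw - z - w}{2-\omega(z+w)}
\]
does \emph{not} factor as a product $f(z)g(w)$ of M\"obius maps: the denominator $2-\omega z-\omega w$ is irreducible over $\c[z,w]$, and more directly, setting $w=0$ gives $-z/(2-\omega z)$, which vanishes at $z=0$, so any putative factorization would force $f(0)g(0)=0$ while $\Phi_\omega\circ\pi(0,0)=0$ and the other factor would blow up. (It \emph{is} separately M\"obius in each variable, but that is not the same as factoring.) So the ``product of Blaschke factors'' picture does not apply here, and the symmetric-function bookkeeping you sketch cannot get off the ground. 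Stick with the first route; with the line $\ell$ in hand it is short and complete.
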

\begin{proof}
Suppose $\Phi$ is not injective.  Then there exist distinct points $(s^1,s^2)$, $ (t^1,t^2)\in G$ such that $\Phi_{\omega_j}(s^1,s^2)=\Phi_{\omega_j}(t^1,t^2)$ for $j = 1, 2$.  On expanding and simplifying this relation we deduce that
\[
s^1-t^1-2\omega_j(s^2-t^2) -\omega_j^2(s^1 t^2-t^1 s^2)=0.
\]
A little manipulation demonstrates that both $(s^1,s^2)$ and $(t^1,t^2)$ lie on the complex line
\[
\ell\df\{(s^1,s^2)\in\c^2: (\omega_1 +\omega_2)s^1 -2\omega_1\omega_2 s^2 =2\}.
\]
However, $\ell$ does not meet $G$.  For suppose that $(s^1,s^2)\in \ell\cap G$.
Then there exists $\beta\in\d$ such that
\begin{align*}
s^1&=\beta+\bar\beta s^2,\\
2\omega_1\omega_2 s^2&= (\omega_1 +\omega_2)s^1-2 = (\omega_1 +\omega_2)(\beta+\bar\beta s^2)-2.
\end{align*}
On solving the last equation for $s^2$ we find that
\[
s^2=-\bar\omega_1\bar\omega_2\frac{2-(\omega_1+\omega_2)\beta}{2-(\bar\omega_1+\bar\omega_2)\bar\beta},
\]
whence $|s^2|=1$, contrary to the hypothesis that $(s^1,s^2)\in G$.  Hence $\Phi$ is injective on $G$.
\end{proof}
\begin{remark} \rm
$\Phi$ has an analytic extension to the set $\Ga\setminus\{(2\bar\omega_1,\bar\omega_1^2),(2\bar\omega_2,\bar\omega_2^2)\}$, where $\Ga$ is the closure of $G$ in $\c^2$.  However this extension is {\em not} injective: it takes the constant value $(-\bar\omega_2,-\bar\omega_1)$ on a curve lying in $\partial G$.
\end{remark}

\begin{theorem}\label{purebalextremals}
Let $\de=(\la,v)$ be a purely balanced tangent to $G$ and let $\Phi_\omega$ solve $\Car\de$ for the two distinct points $\omega_1, \omega_2 \in\t$.    Let $m_j$ be the automorphism of $\d$ such that $m_j\circ \Phi_{\omega_j}$ is well aligned at $\de$ for $j=1,2$ and let
\beq\label{defPhi2}
\Phi=(\Phi^1,\Phi^2)=(m_1\circ\Phi_{\omega_1},m_2\circ \Phi_{\omega_2}):G\to\d^2.
\eeq  

For every $t\in [0,1]$ and every function $\Theta$ in the Schur class of the bidisc the function
\begin{align} \label{formextrem}
F&=t\Phi^1+(1-t)\Phi^2+\notag \\
	&\hspace*{1cm}t(1-t)(\Phi^1-\Phi^2)^2\frac{\Theta\circ\Phi}{1-[(1-t)\Phi^1+t\Phi^2]\Theta\circ\Phi}
\end{align}
is a well-aligned Carath\'eodory extremal function for $\de$.
\end{theorem}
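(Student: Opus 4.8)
The plan is to adapt the strategy used for royal tangents in Theorem~\ref{royalthm}: there one lifts a Carath\'eodory problem on $G$ to a Nevanlinna--Pick problem on the bidisc through the symmetrization map, whereas here one should instead \emph{push forward} along the injective embedding $\Phi=(\Phi^1,\Phi^2)\colon G\to\d^2$ of Lemma~\ref{injective}, with a complex geodesic of $G$ through $\de$ playing the role of the diagonal disc in $\d^2$. The starting observation is that the function $F$ of \eqref{formextrem} is exactly $f\circ\Phi$, where $f$ is the Agler--McCarthy function of \eqref{formpsi} for the given $t\in[0,1]$ and $\Theta$, namely
\[
f(\mu)=t\mu^1+(1-t)\mu^2+t(1-t)(\mu^1-\mu^2)^2\frac{\Theta(\mu)}{1-[(1-t)\mu^1+t\mu^2]\Theta(\mu)} \qquad (\mu\in\d^2).
\]
By the theorem of Agler and McCarthy quoted in Section~\ref{royal} this $f$ lies in $\ess(\d^2)$; and putting $\mu^1=\mu^2=\zeta$ annihilates the third summand, so that $f(\zeta,\zeta)=\zeta$ for every $\zeta\in\d$. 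Equivalently $f\circ\iota=\idd$, where $\iota\colon\d\to\d^2$ is the diagonal embedding $\iota(\zeta)=(\zeta,\zeta)$.

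The heart of the argument is to produce a complex geodesic $k\colon\d\to G$ through $\de$ for which $\Phi\circ k=\iota$. Using the description \eqref{pbexpress} of purely balanced tangents I would write $\de=\bigl(h_m(z_0),\,c\,h_m'(z_0)\bigr)$ for a hyperbolic automorphism $m$ of $\d$, a point $z_0\in\d$ and a nonzero $c$, so that $\la=h_m(z_0)$, $v=c\,h_m'(z_0)$ and $\de=(h_m)_*\bigl((z_0,c)\bigr)$. Since $\Phi_{\omega_j}$ solves $\Car\de$, the composition $\Phi_{\omega_j}\circ h_m$ is an automorphism of $\d$ (Section~\ref{5types}), hence so is $\rho_j\df\Phi^j\circ h_m=m_j\circ\Phi_{\omega_j}\circ h_m$ for $j=1,2$. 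Now $\Phi^j$ is well aligned at $\de$, i.e.\ $(\Phi^j)_*(\de)=(0,\car\de)$; since $\de=(h_m)_*((z_0,c))$ this amounts to $(\rho_j)_*\bigl((z_0,c)\bigr)=(0,\car\de)$, and comparing base points and derivatives gives $\rho_j(z_0)=0$ and $\rho_j'(z_0)=\car\de/c$. An automorphism of $\d$ is determined by its value and derivative at a single point, so $\rho_1=\rho_2$; write $\rho$ for the common automorphism. Setting $k\df h_m\circ\rho\inv$ we obtain $\Phi^j\circ k=\rho_j\circ\rho\inv=\idd$ for $j=1,2$, hence $\Phi\circ k=\iota$; moreover $k(0)=h_m(z_0)=\la$ and $k'(0)=h_m'(z_0)/\rho'(z_0)=c\,h_m'(z_0)/\car\de=v/\car\de$, so that $k_*\bigl((0,\car\de)\bigr)=\de$.

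With $k$ in hand the conclusion is immediate. As $f\colon\d^2\to\d^-$ and $\Phi\colon G\to\d^2$ are holomorphic, $F=f\circ\Phi$ is holomorphic from $G$ to $\d^-$; and $F\circ k=f\circ\Phi\circ k=f\circ\iota=\idd$ is non-constant, so by the maximum modulus principle $F(G)\subseteq\d$, i.e.\ $F\in\d(G)$. Finally,
\[
F_*(\de)=\bigl(F\circ k\bigr)_*\bigl((0,\car\de)\bigr)=(\idd)_*\bigl((0,\car\de)\bigr)=(0,\car\de),
\]
so $|F_*(\de)|=\car\de$; since $\car\de=\sup_{\psi\in\d(G)}|\psi_*(\de)|$, the function $F$ attains the supremum, that is, $F$ solves $\Car\de$, and the identity $F_*(\de)=(0,\car\de)$ says precisely that $F$ is well aligned at $\de$, as required.

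The one step I expect to need genuine care is the construction of $k$, and within it the identity $\Phi^1\circ h_m=\Phi^2\circ h_m$: it is precisely this coincidence that makes $\Phi\circ k$ the \emph{diagonal} map rather than merely a pair of automorphisms of $\d$ (for which $f\circ\Phi\circ k$ would not be an automorphism and the alignment computation would fail). Above I derive it from the well-alignment of $\Phi^1,\Phi^2$ together with the rigidity of disc automorphisms; alternatively one can obtain $k$ from Lemma~\ref{k=c} applied to $\ph=\Phi^1$ and then argue, using the ``moreover'' clause of that lemma and well-alignment, that $\Phi^2\circ k$ is an automorphism of $\d$ fixing $0$ with derivative $D_v\Phi^2(\la)/\car\de=1$ there, hence equal to $\idd$ as well. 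Everything else is a routine composition argument or a direct appeal to results already in hand.
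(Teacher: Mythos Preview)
Your proof is correct and follows essentially the same strategy as the paper: write $F=f\circ\Phi$ with $f$ the Agler--McCarthy function of \eqref{formpsi}, and use the well-alignment of $\Phi^1,\Phi^2$ to see that $\Phi$ carries $\de$ onto the diagonal of $\d^2$. The paper does this more directly, simply computing $\Phi_*(\de)=\bigl((0,0),\car\de\,(1,1)\bigr)$ from the definition of well-alignment and invoking that $f$ solves $\Car(\Phi_*(\de))$, whereas you take the additional (but equally valid) step of constructing the geodesic $k=h_m\circ\rho^{-1}$ with $\Phi\circ k=\iota$ and verifying $F\circ k=\idd$; both routes yield $F_*(\de)=(0,\car\de)$.
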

\begin{proof}
By Lemma \ref{injective}, $\Phi$ maps $G$ injectively into $\d^2$.  By choice of $m_j$,
\[
(m_j\circ\Phi_{\omega_j})_*(\de)=(0, \car{\de}).
\]
Hence 
\[
\Phi_*(\de)= \left((0,0),\car{\de}(1,1)\right),
\]
which is tangent to the diagonal $\{(w,w):w\in\d\}$ of the bidisc.  Since the diagonal is a complex geodesic in $\d^2$, we have
\[
\car{\Phi_*(\de)} = (0,\car{\de}).
\]
As in Section \ref{royal}, we appeal to \cite[Subsection 11.6]{AgM2} to assert that, for every $t\in[0,1]$ and every function $\Theta$ in the Schur class of the bidisc, the function $f\in\c(\d^2)$ given by
\beq\label{formulaf}
f(\la)= t\la^1+(1-t)\la^2+t(1-t)^2\frac{ \Theta(\la)}{1-[(1-t)\la^1+t\la^2]\Theta(\la)}
\eeq
solves $\Car(\Phi_*(\de))$.  For every such $f$ the function 
$ F\df f\circ\Phi: G\to \d$ satisfies
\[
F_*(\de) = (f\circ\Phi)_*(\de)=f_*(\Phi_*(\de))=(0,\car{\de}).
\]
Thus $F$ is a well-aligned Carath\'eodory extremal for $\de$.
On writing out $F$ using equation \eqref{formulaf} we obtain equation \eqref{formextrem}.
\end{proof}
\begin{remark} \rm
The range of $\Phi$ is a subset of $\d^2$ containing $(0,0)$ and is necessarily nonconvex, by virtue of a result of Costara \cite{cos04} to the effect that $G$ is not isomorphic to any convex domain.    $\Phi(G)$ is open in $\d^2$, since the Jacobian determinant of $(\Phi_{\omega_1},\Phi_{\omega_2})$  at $(s^1,s^2)$ is
\[
	\frac{4(\omega_1-\omega_2)(1-\omega_1\omega_2 s^2)}{(2-\omega_1s^1)^2(2-\omega_2s^1)^2}
\]
which has no zero in $G$.
 Carath\'eodory extremals $F$ given by equation \eqref{formulaf}  have the property that the map $F\circ \Phi\inv$ on $\Phi(G)$ extends analytically to a map in $\d(\d^2)$.  There may be other Carath\'eodory extremals $\ph$ for $\de$ for which $\ph\circ\Phi\inv$ does not so extend.  Accordingly we do not claim that the Carath\'eodory extremals described in Theorem \ref{purebalextremals} constitute all extremals for a purely balanced tangent.
\end{remark}

\section{Relation to a result of L. Kosi\'nski and W. Zwonek}\label{relation}
Our main result in Section \ref{Uniq}, on the essential uniqueness of solutions of $\Car\de$ for purely unbalanced and exceptional tangents, can be deduced from  \cite[Theorem 5.3]{kos} and some known facts about the geometry of $G$.  However, the terminology and methods of Kosi\'nski and Zwonek are quite different from ours, and we feel it is worth explaining their statement in our terminology.

Kosi\'nski and Zwonek speak of left inverses of complex geodesics where we speak of Carath\'eodory extremal functions for \nd tangents.  These are essentially equivalent notions.  By a {\em complex geodesic} in $G$ they mean a holomorphic map from $\d$ to $G$ which has a holomorphic left inverse.  Two complex geodesics $h$ and $k$ are {\em equivalent} if there is an automorphism $m$ of $\d$ such that $h=k\circ m$, or, what is the same, if $h(\d)=k(\d)$.  It is known (for example \cite[Theorem A.10]{ay2004}) that, for every \nd tangent $\de$ to $G$, there is a {\em unique} complex geodesic $k$ of $G$ up to equivalence such that $\de$ is tangent to $k(\d)$.  A function $\ph\in\d(G)$ solves $\Car\de$ if and only if $\ph\circ k$ is an automorphism of $\d$.  Hence, for any complex geodesic $k$ and any \nd tangent $\de$ to $k(\d)$,  to say that $k$ has a unique left inverse up to equivalence is the same as to say that $\Car\de$ has an essentially unique solution.

Kosi\'nski and Zwonek also use a different classification of types of complex geodesics (or equivalently tangent vectors) in $G$, taken from \cite{pz05}.  There it is shown that every complex geodesic $k$ in $G$, up to composition with automorphisms of $\d$ on the right and of $G$ on the left, is of one of the following types.
\begin{enumerate}[\rm (1)]
\item 
\[
k(z)= (B(\sqrt{z})+B(-\sqrt{z}), B(\sqrt{z}) B(-\sqrt{z})
\]
where $B$ is a non-constant Blaschke product of degree $1$ or $2$ satisfying $B(0)=0$;
\item
\[
k(z)= (z+m(z),zm(z))
\]
where $m$ is an automorphism of $\d$ having no fixed point in $\d$.
\end{enumerate}
These types correspond to our terminology from \cite{aly2016} (or from Section \ref{5types}) in the following way.
Recall that an automorphism of $\d$ is either the identity, elliptic, parabolic or hyperbolic, meaning that the set $\{z\in\d^-:m(z)=z\}$ consists of either all of $\d^-$, a single point of $\d$, a single point of $\t$ or two points in $\t$.
\begin{enumerate}
\item[\rm (1a)] If $B$ has degree $1$, so that $B(z)=cz$ for some $c\in\t$ then, up to equivalence, $k(z)=(0,-c^2z)$.  These we call the {\em flat geodesics}.
The general tangents to flat geodesics are the flat tangents described in Section \ref{5types}, that is $\de=\left((\beta+\bar\beta z,z),c(\bar\beta,1)\right)$ for some $\beta\in\d, \, z\in\d$ and nonzero $c\in\c$.
\item[\rm (1b)]  If $B(z)=cz^2$ for some $c\in\t$ then $k(z)= (2cz, c^2z^2)$.  Thus $k(\d)$ is the royal variety $\calr$, and the tangents to $k(\d)$ are the royal tangents.

\item[\rm (1c)] If $B$ has degree $2$ but  is not of the form (1b),  say $B(z)=cz(z-\al)/(1-\bar\al z)$ where $c\in\t$ and $\al\in\d\setminus \{0\}$, then
\[
k(z)= \frac{\left(2c(1-|\al|^2)z, c^2z(z-\al^2)\right)}{1-\bar\al^2z}.
\]
Here $k(\d)$ is not $\calr$ but it meets $\calr$ (at the point $(0,0)$).  It follows that $k(\d)$ is a purely unbalanced geodesic and the tangents to $k(\d)$ are the purely unbalanced tangents.
\item[\rm (2a)]  If $m$ is a hyperbolic automorphism of $\d$ then $k(\d)$ is a purely balanced geodesic and its tangents are purely balanced tangents.
\item[\rm (2b)]  If $m$ is a parabolic automorphism of $\d$ then $k(\d)$ is an exceptional geodesic, and its tangents are exceptional tangents. 
\end{enumerate}
With this description, Theorem 5.3 of \cite{kos} can be paraphrased as stating that a complex geodesic $k$ of $G$ has a unique left inverse (up to equivalence) if and only if $k$ is of one of the forms (1c) or (2b).  These are precisely the purely unbalanced and exceptional cases in our terminology, that is, the cases of tangents $\de$ for which there is a unique $\omega\in\t$ such that $\Phi_\omega$ solves $\Car\de$, in agreement with our Theorem \ref{ess1}.

The authors prove their theorem with the aid of a result of Agler and McCarthy on the uniqueness of solutions of $3$-point Nevanlinna-Pick problems on the bidisc \cite[Theorem 12.13]{AgM2}.  They also use the same example from Subsection 11.6 of \cite{AgM2} which we use for different purposes in Sections \ref{royal} and \ref{purelybalanced}.

\bibliography{references}

\end{document}